\theoremstyle{plain}
\newtheorem{theorem}{Theorem}[section]
\newtheorem{corollary}[theorem]{Corollary}
\newtheorem{conjecture}[theorem]{Conjecture}
\newtheorem{proposition}[theorem]{Proposition}
\newtheorem{definition}[theorem]{Definition}
\theoremstyle{remark}
\theoremstyle{definition}
\newtheorem{remark}[theorem]{Remark}
\newcommand{\R}{\mathbb{R}}
\newcommand{\HH}{\mathbb{H}}
\newcommand{\Sm}{\mathscr{S}}
\newcommand{\omegatil}{\widetilde{\omega}}
\newcommand{\alphatil}{\tilde{\alpha}}
\newcommand{\Mtil}{\widetilde{M}}
\newcommand{\Btil}{\widetilde{B}}
\newcommand{\sigmatil}{\tilde{\sigma}}
\newcommand{\gtil}{\widetilde{g}}
\newcommand{\htil}{\tilde{h}}
\newcommand{\ytil}{\widetilde{y}}
\DeclareMathOperator{\vol}{vol}
\DeclareMathOperator{\sys}{sys}
\numberwithin{equation}{section}
\title[Macroscopic Schoen conjecture]{Macroscopic Schoen conjecture for manifolds with non-zero simplicial volume}
\author{F.~Balacheff and S.~Karam}
\address{F. Balacheff, Universitat Aut\`onoma de Barcelona, Spain.}
\email{fbalacheff@mat.uab.cat}
\address{S. Karam, Lebanese University, Lebanon.}
\email{karam.steve.work@gmail.com}
\thanks{The first author acknowledges support by grants ANR Finsler (ANR-12-BS01-0009-02) and Ram\'on y Cajal (RYC-2016-19334). The second author acknowledges support from grant ANR CEMPI (ANR-11-LABX-0007-01).}
\keywords{Guth conjecture, Schoen conjecture, smoothing inequality}
\subjclass{53C23}
\begin{document}
\maketitle

\begin{abstract} 
We prove that given a hyperbolic manifold endowed with an auxiliary Riemannian metric whose sectional curvature is negative and whose volume is sufficiently small in comparison to the hyperbolic one, we can always find for any radius at least $1$ a ball in its universal cover whose volume is bigger than the hyperbolic one. This result is deduced from a non-sharp macroscopic version of a conjecture by R.~Schoen about scalar curvature, whose proof is a variation of an argument due to M.~Gromov and based on a smoothing technique. 
We take the opportunity of this work to present a full account of this technique which involves simplicial volume and deserves to be better known. 
\end{abstract}

\bigskip

\section{Introduction}

L.~Guth proved in  \cite{Gu11} that there exists a positive constant $\delta_n$ such that, if $M^n$ is a closed hyperbolic manifold of dimension $n$ and $g$ is an auxiliary Riemannian metric with $\vol(M,g) < \delta_n \vol(M,hyp)$, then we can always find some ball of radius $1$ in its universal cover whose volume is bigger than $V_{hyp}(1)$. Here $M$ is said hyperbolic if it admits a metric of constant sectional curvature $-1$  and $V_{hyp}(R)$ denotes the hyperbolic volume of a geodesic ball of radius $R$ in $\HH^n$. Such a result gets some perspective once you translate the famous theorem of Besson, Courtois \& Gallot about volume entropy for hyperbolic manifolds  in the following way (see \cite{BCG95}): if $M$ is a closed hyperbolic manifold and $g$ is an auxiliary Riemannian metric with $\vol(M,g) < \vol(M,hyp)$, then there exists $R_0>0$ such that {\it any} ball of radius  $R\geq R_0$ in its Riemannian universal cover has volume bigger than $V_{hyp}(R)$. This leads Guth to conjecture the following.

\begin{conjecture}[Guth, 2011]
If $M$ is a closed hyperbolic manifold and $g$ is an auxiliary Riemannian metric such that $\vol(M,g) < \vol(M,hyp)$, then for all radius $R>0$
$$
\max_{y \in \Mtil} |\Btil(y,R)|> V_{hyp}(R).
$$
Here $|\Btil(y,R)|$ denotes the Riemannian volume of a metric ball $\Btil(y,R)\subset \Mtil$ in the universal cover with respect to the lifted metric $\gtil$. 
\end{conjecture}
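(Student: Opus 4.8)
I would argue by contradiction, following a single function: set
$$
\rho_g(R)\;:=\;\frac{\displaystyle\max_{y\in\Mtil}\,|\Btil(y,R)|}{V_{hyp}(R)},\qquad R>0,
$$
so that the conjecture reads ``$\vol(M,g)<\vol(M,hyp)\ \Rightarrow\ \rho_g>1$ on all of $(0,\infty)$''. Suppose instead $\vol(M,g)<\vol(M,hyp)$ while $\rho_g(R_0)\le 1$ for some $R_0>0$. The plan is to rule out such an $R_0$ from three sides --- $R_0$ large, $R_0$ small, and the critical band $R_0\asymp 1$ --- the guiding hope being that $\log\rho_g$ has enough regularity in $R$ (monotonicity or superadditivity) to transport information between the two ends, where its behaviour is governed respectively by volume entropy and by scalar curvature.

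\emph{Large radii.} This end is essentially free. By Besson--Courtois--Gallot \cite{BCG95}, $\vol(M,g)<\vol(M,hyp)$ forces the volume entropy $h(M,g)$ to satisfy $h(M,g)^n\ge (n-1)^n\,\vol(M,hyp)/\vol(M,g)>(n-1)^n$, hence $h(M,g)=n-1+2\eta$ for some $\eta>0$; since $V_{hyp}(R)\asymp e^{(n-1)R}$ while $\max_y|\Btil(y,R)|\ge e^{(n-1+\eta)R}$ for $R$ large, we obtain $\rho_g(R)\to\infty$, so $\rho_g(R)>1$ for all $R\ge R_1=R_1(n,\eta)$. This is precisely the corollary of \cite{BCG95} recalled in the introduction. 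The only nuisance is that $R_1\to\infty$ as $\vol(M,g)\uparrow\vol(M,hyp)$, so the remaining window $(0,R_1)$ resists a bare compactness argument.

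\emph{Small radii.} A Bishop-type expansion gives, uniformly in $y$,
$$
|\Btil(y,R)|=\omega_n R^n\Bigl(1-\tfrac{\mathrm{Scal}_g(y)}{6(n+2)}R^2+O(R^3)\Bigr),\qquad V_{hyp}(R)=\omega_n R^n\Bigl(1+\tfrac{n(n-1)}{6(n+2)}R^2+O(R^3)\Bigr),
$$
so for all sufficiently small $R$ one has $\rho_g(R)>1$ unless $\mathrm{Scal}_g\ge -n(n-1)=\mathrm{Scal}_{hyp}$ everywhere; in the latter case the Schoen scalar-curvature rigidity conjecture would give $\vol(M,g)\ge\vol(M,hyp)$, a contradiction. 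Thus, granting Schoen's conjecture, $\rho_g>1$ on a neighbourhood of $0$, and the small-$R$ regime of the present statement \emph{reduces to} Schoen's conjecture. A self-contained treatment must instead replace the pointwise scalar bound by the macroscopic quantity $\max_y|\Btil(y,R_0)|$ itself --- the device for which is again Gromov's smoothing inequality, now at the small scale $R_0$.

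\emph{The critical band, and the main obstacle.} For $R_0$ bounded away from $0$ and $\infty$ the tool is the smoothing inequality (a full account of which is one purpose of this paper): combined with Gromov's proportionality $\vol(M,hyp)=v_n\|M\|$ it produces a lower bound of the shape $\max_y|\Btil(y,R_0)|\ge c_n(R_0)\,\vol(M,g)$, i.e. $\rho_g(R_0)\ge c_n(R_0)\,\vol(M,g)/V_{hyp}(R_0)$ --- Guth's mechanism. With the constant $c_n(R_0)$ that the straightening currently delivers, this only yields $\rho_g(R_0)>1$ under the \emph{smallness} hypothesis $\vol(M,g)<\delta_n(R_0)\,\vol(M,hyp)$, which is the non-sharp theorem proved below and not the conjecture. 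To reach the stated statement one needs a \emph{sharp, scale-localized} smoothing inequality, forcing $c_n(R_0)\,\vol(M,hyp)>V_{hyp}(R_0)$ so that $\vol(M,g)<\vol(M,hyp)$ alone suffices --- equivalently, the dimensional loss in the straightening of a fundamental cycle must disappear in the hyperbolic equality case. The route I would pursue is to straighten against the genuine hyperbolic structure rather than against a fixed net, running a ``macroscopic'' barycentre-map argument in the spirit of \cite{BCG95} with balls of radius $R_0$ in place of the whole universal cover, so that $c_n(R_0)\to V_{hyp}(R_0)/\vol(M,hyp)$ exactly when the straightening degenerates to the hyperbolic developing map. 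This sharp scale-by-scale inequality --- together with the removal of Schoen as a hypothesis at the small end --- is where I expect the real difficulty to lie; absent it one can only establish the quantitatively weaker statement proved below, which already suffices for the hyperbolic corollary announced in the abstract.
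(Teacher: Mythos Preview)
The statement you are addressing is a \emph{conjecture} in the paper --- it is stated as open and never proved. The paper establishes only the non-sharp Theorem~\ref{th:main} (a macroscopic doubling inequality under the small-volume hypothesis $\vol(M,g)\le\alpha_n\|M\|$) and Corollary~\ref{cor:main} (the conclusion of Guth's conjecture for $R\ge1$ under an additional negative-curvature assumption and the small-volume hypothesis $\vol(M,g)\le\beta_n\vol(M,hyp)$). There is therefore no proof in the paper to compare your attempt against.

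Your text is not a proof but an honest outline of the obstructions, and it tracks the paper's own introductory discussion closely: Besson--Courtois--Gallot handles the large-$R$ regime, the $R\to0$ limit is tied to Schoen's scalar-curvature conjecture (the paper notes Guth $\Rightarrow$ Schoen; you observe the partial converse that Schoen would settle the small-$R$ end), and Gromov's smoothing inequality governs the intermediate scales but only with a non-sharp dimensional constant. You correctly flag the two missing ingredients --- Schoen's conjecture itself and a sharp, scale-localised smoothing/barycentre inequality with no loss in the hyperbolic equality case --- and do not pretend to supply either; the paper makes no such claim either. One point worth stressing in your trichotomy: both boundaries $r_0$ and $R_1$ are metric-dependent, and since $R_1(g)\to\infty$ as $\vol(M,g)\uparrow\vol(M,hyp)$ (as you already note), even granting Schoen the residual ``critical band'' is not a fixed compact set of radii but can be arbitrarily large, so any argument there must genuinely be sharp rather than soft --- which is precisely why the paper retreats to the non-sharp statement.
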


While Besson, Courtois \& Gallot theorem  proves this conjecture for large radius (depending on the metric), the case where radius goes to zero is connected to a conjecture by R.~Schoen on scalar curvature. Indeed remember that for small radius $R>0$ the volume of a ball centered at some point $x$ in $M$ admits the following expansion:
$$
|B(x,R)|_g=\omega_nR^n\left(1-{Scal_g(x) \over 6(n+2)}R^2+o(R^3)\right).
$$
The condition $Scal_g(x)>Scal_{hyp}$ thus implies that $|B(x,R)|_g< V_{hyp}(R)$ provided $R$ is small enough. In particular Guth conjecture would imply via a scaling argument the conjecture attributed to Schoen\footnote{This conjecture would be a consequence of the fact that the Yamabe invariant of a closed hyperbolic manifold is achieved by its hyperbolic metric, which is conjectured to be true by Schoen in \cite[p.127]{Schoen89}.}  that a closed hyperbolic manifold with a Riemannian metric $g$ whose scalar curvature is at least equal to the scalar curvature of hyperbolic space has volume at least equal to the hyperbolic one. 

  The flat version of Guth conjecture is the generalized Geroch conjecture due to M. Gromov in 1985 (see \cite{Guth10}): for any $R>0$ you can always find in the universal cover of any Riemannian torus a ball of radius $R$ whose volume is at least equal to the euclidean one, that is $\omega_n R^n$.
The generalized Geroch conjecture, if it is true, would imply the original Geroch conjecture that tori do not admit Riemannian metrics with positive scalar curvature. The original Geroch conjecture is solved, see \cite{SY79} and \cite{GL80}.\\

Having this context in mind, the following result appears to be of some interest. 

\begin{theorem}\label{th:main}
There exists a constant $\alpha_n$ such the following estimate holds. If  $(M,g)$ is a closed Riemannian manifold of dimension $n$ with $\vol(M,g)< \alpha_n  \|M\|$, then for all $R\geq 1$
$$
\max_{y \in \Mtil} {|\Btil(y,R)| \over |\Btil(y,R/2)|}> {V_{hyp}(R) \over V_{hyp}(R/2)}.
$$
\end{theorem}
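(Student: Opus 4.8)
The plan is to deduce Theorem~\ref{th:main} from Gromov's smoothing inequality, which relates the simplicial volume $\|M\|$ to a macroscopic quantity built from the covering geometry of $(M,g)$. The strategy proceeds by contradiction: suppose there is a closed $(M,g)$ with $\vol(M,g)\leq \alpha_n\|M\|$ (for a constant $\alpha_n$ to be specified) but such that for \emph{every} $y\in\Mtil$ one has
$$
\frac{|\Btil(y,R)|}{|\Btil(y,R/2)|}< \frac{V_{hyp}(R)}{V_{hyp}(R/2)}
$$
for some fixed $R\geq 1$. The first step is to convert this pointwise ratio bound into a global upper bound on $\|M\|$ in terms of $\vol(M,g)$; the smoothing inequality should say (roughly) that $\|M\|\leq C_n\,\vol(M,g)\cdot \sup_{y}\,|\Btil(y,R)|/|\Btil(y,R/2)|$ divided by an appropriate normalization, or more precisely a bound of the form $\|M\|\leq C(n,R)\,\vol(M,g)$ where $C(n,R)$ degrades with the macroscopic doubling constant at scale $R$. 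Combining with the contradiction hypothesis on the ratio yields $\|M\|\leq C_n'\,\frac{V_{hyp}(R)}{V_{hyp}(R/2)}\cdot\vol(M,g)$.

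The second step is the arithmetic that closes the argument: since $V_{hyp}(R)/V_{hyp}(R/2)$ is bounded above uniformly for $R\geq 1$ — indeed the hyperbolic doubling ratio at scale $R$ is increasing in $R$ but is dominated by its limit $e^{(n-1)R/2}$-type growth only for large $R$; crucially for the macroscopic statement one iterates the ratio inequality down from $R$ to $1$ through scales $R, R/2, R/4,\dots$ until reaching a scale in $[1/2,1]$ — one multiplies the inequalities telescopically to get $|\Btil(y,R)|< V_{hyp}(R)\cdot |\Btil(y,r)|/V_{hyp}(r)$ for a base scale $r\in[1/2,1]$, and then bounds $|\Btil(y,r)|$ from above by $\vol(M,g)$ (trivially, since a ball in the cover injects its volume contribution, or via a packing/covering count). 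This feeds back to make the constant in the smoothing inequality inconsistent once $\alpha_n$ is chosen small enough relative to the universal constants appearing there, producing the contradiction. One must be slightly careful that the iteration only uses radii $\geq 1$ where the hypothesis is assumed to hold, so the telescoping should go $R\to 2^{-k}R$ only while $2^{-k}R\geq 1$; the leftover factor at the bottom scale is then a harmless dimensional constant since hyperbolic and Euclidean volumes are comparable on scales in $[1/2,1]$.

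The main obstacle I expect is the precise formulation and invocation of the smoothing inequality: one needs a version that controls $\|M\|$ by the \emph{local} macroscopic volume growth (the doubling behaviour of balls in the universal cover) rather than merely by $\vol(M,g)$ itself, since a naive isoperimetric-type bound $\|M\|\leq C_n\vol(M,g)$ would already contradict the hypothesis $\vol(M,g)\leq\alpha_n\|M\|$ for small $\alpha_n$ without saying anything about ratios — so the content is really that the constant in the smoothing inequality can be taken to \emph{decay} when the macroscopic doubling at scale $R$ is small. Concretely I anticipate the key lemma to have the shape: if every ball $\Btil(y,R)$ has $|\Btil(y,R)|\leq \lambda\,|\Btil(y,R/2)|$, then $\|M\|\leq c_n\,\lambda^{?}\,\vol(M,g)$ or a bound involving $\log\lambda$; matching the exponent and tracking how $R\geq 1$ enters (through a fixed number of doubling steps, hence a fixed power) is where the care is needed. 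Once that lemma is in hand, choosing $\alpha_n := 1/(2c_n)$ or similar and contradicting it via the hyperbolic ratio comparison is routine.
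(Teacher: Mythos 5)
Your overall strategy --- argue by contradiction and invoke a smoothing inequality whose constant improves when the macroscopic doubling at scale $R$ is small --- is the right one, and your guess that the correct dependence is on $\log\lambda$ (where $\lambda$ is the doubling ratio) rather than on $\lambda$ itself is exactly what the paper exploits. But the proposal has a genuine gap precisely at the step you flag as ``where the care is needed'': the key lemma is never established, and the surrounding arithmetic you offer in its place does not work. First, a quantifier problem: the negation of the theorem gives you the bound $|\Btil(y,R)|/|\Btil(y,R/2)|< V_{hyp}(R)/V_{hyp}(R/2)$ for all $y$ at \emph{one} radius $R\geq 1$ only, so your telescoping through the dyadic scales $R, R/2, R/4,\dots$ is not available --- you have no control of the ratio at the intermediate scales. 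Second, your parenthetical claim that $V_{hyp}(R)/V_{hyp}(R/2)$ is uniformly bounded for $R\geq 1$ is false (it grows like $e^{(n-1)R/2}$); the quantity that is uniformly bounded for $R\geq 1$ is $\frac{1}{R}\log\bigl(V_{hyp}(R)/V_{hyp}(R/2)\bigr)$, and identifying this as the relevant quantity is the whole point.

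What is missing is the explicit construction that turns the single-scale hypothesis into a bound on the Lipschitz constant of a smoothing operator. The paper takes $\Sm_{\lambda,R}(y)$ to be the normalization of the measure $\bigl(e^{-\lambda d(y,y')}-e^{-\lambda R}\bigr)\mathbb{1}_{B(y,R)}(y')\,d\vol_{\gtil}(y')$, computes
$$
\|d_y\Sm_{\lambda,R}\|\ \leq\ \lambda\cdot\frac{I(\lambda,R)}{I(\lambda,R)-|\Btil(y,R)|},\qquad I(\lambda,R)=\int_{\Btil(y,R)}e^{-\lambda(d(y,y')-R)}\,dy',
$$
and uses the elementary lower bound $I(\lambda,R)\geq e^{\lambda R/2}|\Btil(y,R/2)|$ together with the contradiction hypothesis \emph{at the single scale} $R$ to get $\|d_y\Sm_{\lambda,R}\|<2\lambda$ once $\lambda\geq\frac{2}{R}\log\bigl(2V_{hyp}(R)/V_{hyp}(R/2)\bigr)$. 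Feeding this into the smoothing inequality $\|M\|\leq n!\,\|d\Sm\|_\infty^n\,\vol(M,g)$ gives $\|M\|< n!\,f(R)^n\vol(M,g)$ with $f(R)=\frac{4}{R}\log\bigl(2V_{hyp}(R)/V_{hyp}(R/2)\bigr)$, and the theorem follows because $\sup_{R\geq 1}f(R)<\infty$ (indeed $f(R)\to 2(n-1)$). Without supplying this construction and estimate --- or some substitute for it --- the proof is not complete; your step bounding $|\Btil(y,r)|$ by $\vol(M,g)$ at a base scale plays no role in the actual argument and does not by itself produce a contradiction with $\vol(M,g)\leq\alpha_n\|M\|$.
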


Here $\|M\|$ denotes the simplicial volume. We will recall this notion in the core of this paper, but for now the main property of this topological invariant that matters is that for hyperbolic manifolds the simplicial volume and the hyperbolic volume coincide up to some constant depending only on the dimension. This result due to Gromov and Thurston (see \cite{Thu78} and \cite{Gro82}) implies in particular that Riemannian manifolds which are also hyperbolic and whose volume is sufficiently small admits for any radius $R\geq 1$ some ball in their universal cover with some hyperbolic flavour.
Because scalar curvature can be alternatively defined as the following limit
$$
Scal_g(x)=8(n+2)\lim_{R \to 0}{1  \over R^2}\left(1-{|B(x,R)|\over 2^n |B(x,R/2)|}\right),
$$
we see that  an analog of Theorem \ref{th:main} valid for $R \to 0$ would imply a non-sharp version of Schoen conjecture. This is the reason why we interpret Theorem \ref{th:main} as a non-sharp {\it macroscopic} version of Schoen conjecture.\\

Theorem \ref{th:main} can also be seen as a stronger macroscopic version of the following theorem by Gromov \cite[Corollary, p. 36]{Gro82}: if the Ricci curvature is at least that of hyperbolic space, then the volume of the manifold is at least a constant times the simplicial volume. Indeed Bishop-Gromov inequality implies that with such a Ricci bound, the ratio between volumes of balls of radius $R$ and $R/2$ is at most the corresponding ratio in hyperbolic space.\\

Another argument in favor of Theorem \ref{th:main} is that it can be used to prove the following non-sharp version of Guth conjecture in the case of negative curvature for radii big enough.

\begin{theorem}\label{cor:main}
Let $M$ be a closed hyperbolic manifold of dimension $n$ and $g$ a Riemannian metric with negative sectional curvature.
There exists a positive constant $\beta_n$ such that the condition $\vol(M,g)< \beta_n \vol(M,hyp)$ implies that for all $R\geq 1$
$$
\max_{y \in \Mtil} |\Btil(y,R)|> V_{hyp}(R).
$$
\end{theorem}

This theorem was already known in dimension $n=2$ without any curvature bound condition, by a result of the second named author, see \cite{Kar15}.\\

The main ingredient in the proof of Theorem \ref{th:main} is a result by Gromov called the {\it smoothing inequality}, see \cite[pp. 33-34]{Gro82}.

This inequality relies on a dual definition of the simplicial volume of a manifold $M$ at the level of its universal covering. Loosely speaking, instead of minimizing the $\ell_1$-norm of simplicial cycles representing the fundamental class of $M$, Gromov explains in  \cite[pp. 28-31]{Gro82} how to define simplicial volume by minimizing the $\ell_\infty$-norm of some special cocycles in its universal covering. These special cocycles satisfy a property of {\it straightness} that mimics the caracteristics of a particular cocycle appearing in the hyperbolic context. The idea is that in the hyperbolic case, there exists one privileged representation of a simplex in its homotopy class with vertices fixed, namely the geodesic one. In the general Riemannian case, such a privileged representation is no longer well defined, but its dual version persists. 

The second ingredient of this smoothing inequality is, in the presence of a Riemannian metric, to replace points interpreted as Dirac measures by measures with density using the geometry inherited by the universal cover. More specifically, if $M$ is endowed with a Riemannian metric, Gromov essentially smooths Dirac measures of the universal covering into density measures supported by metric balls with an exponential decay. By combining this smoothing process with the dual definition of simplicial volume, Gromov obtains a smoothing inequality involving the Riemannian volume, the simplicial volume and the volume of balls in the universal cover.

 Gromov used this smoothing inequality to obtain mainly three results which compare for a hyperbolic manifold with an auxiliary Riemannian metric the following invariants: the Ricci curvature and the volume (see \cite[Corollary, p. 36]{Gro82}), the entropy and the volume (see \cite[p. 37]{Gro82}), and lastly the systole and the volume (see \cite[Theorem 6.4.D']{Gro83} and \cite[Theorem 3.B.1]{Gro96}). Our argument in the proof of Theorem \ref{th:main} appears as a variation of the one used by Gromov to prove the volume entropy estimate, see Remark \ref{rem:main}.
 See also subsection \ref{sec:concl} for a discussion about how Theorems \ref{th:main} and \ref{cor:main} interact with Gromov's results.\\

The first two sections of this paper are devoted to present this smoothing inequality. Some lectors will certainly prefer to read Gromov's text \cite{Gro82} which contains much more material. But we hope that our presentation may be of some help in complement of this lecture for the others. The alternative definition of simplicial volume using the notion of {\it straight invariant fundamental cocycle} is presented in section \ref{sec:cocycle}. Section \ref{sec:smoothing} presents the {\it smoothing} procedure of straight invariant fundamental cocycles, and a proof of the related smoothing inequality. Then we prove Theorem \ref{th:main} and Theorem \ref{cor:main} in section \ref{sec:proofs}.\\

\section{Simplicial volume via straight invariant cocycles}\label{sec:cocycle}

Let $M$ be a closed oriented $n$-dimensional manifold. Denote by $\widetilde{M}$ its universal cover and by 
$
\pi : \widetilde{M} \to M$
the corresponding covering map.\\

\subsection{Standard definition of simplicial volume.}
According to Mostow rigidity theorem \cite{Mos68} a hyperbolic metric on a manifold of dimension at least three is completely determined by the fundamental group of the manifold (up to isometry). 
The {\it simplicial volume} was originally designed by Gromov to find an explicit topological definition of volume for hyperbolic manifolds, and was first defined as the quantity
$$
\|M\|=\inf \{\sum_{i=1}^k |r_i| \mid \sum r_i \sigma_i \, \, \text{represents} \, \, [M]\},
$$
where the infimum is taken over all real singular chains representing the fundamental class of $M$. We will refer to the quantity $\sum_{i=1}^k |r_i|$ as the $\ell_1$-norm of the chain $\sum r_i \sigma_i$ in the sequel. 
If $M^n$ is a closed manifold of dimension $n\geq 2$ endowed with a hyperbolic metric $hyp$ then 
$$
\mathcal{V}_n \, \|M\|=\vol(M, hyp)
$$
where $\mathcal{V}_n$ denotes the maximal volume of an ideal $n$-simplex in the hyperbolic space $\HH^n$. This result due to both Gromov \cite{Gro82} and Thurston \cite{Thu78} ends the search for a topological definition of hyperbolic volume. It was also the main ingredient in Gromov's proof of Mostow rigidity theorem. 

For our purpose we only need to recall how to prove that $\mathcal{V}_n \, \|M\|\geq \vol(M, hyp)$. Because $M$ is hyperbolic it is provided with a straight operator. First define the notion of straight simplex of $\Mtil$ by induction as follows: the straight $k$-simplex of $\Mtil$ with vertices $(\ytil_0,\ldots,\ytil_k)$ is defined as the $\widetilde{hyp}$-geodesic cone over the straight simplex with vertices $(\ytil_0,\ldots,\ytil_{k-1})$. Then define the straight operator as the map which assigns to any singular $k$-simplex $\sigmatil$ of the universal cover $\widetilde{M}$ the unique straight simplex $\sigmatil_{st}$ with the same set of vertices. Using this straight operator, we see that any real singular chain $\sum r_i \sigma_i$ is homotopic to the straighted chain $\sum r_i \, \pi \circ (\tilde{\sigma}_i)_{st}$ where $\tilde{\sigma}_i$ denotes any lift of $\sigma_i$ to $\widetilde{M}$. If we represent the fundamental class of $[M]$ by such a chain, we get
$$
\vol(M,hyp)\leq \sum_{i=1}^k |r_i| \vol((\tilde{\sigma}_i)_{st},hyp)\leq  \sum_{i=1}^k |r_i| \mathcal{V}_n
$$
thus proving the desired inequality. Here observe that $\mathcal{V}_n$ can be alternatively defined as  the supremum of the volume of a straight $n$-simplex in the hyperbolic space $\HH^n$.\\

\subsection{Dual definition of simplicial volume.}
Next we need to recall the link between simplicial volume and bounded cohomology. For a cohomological class $\Omega \in H^n(M,\R)$ set $\|\Omega\|_\infty:=\inf_{[\omega]=\Omega}\sup_\sigma |\omega(\sigma)|$
where the supremum is taken over all simplices and the infimum over all real simplicial cocycles $\omega$ representing $\Omega$.
The following duality principle provides an alternative definition for simplicial volume and at the same time proves that manifolds with non-zero simplicial volume are exactly those whose dual fundamental class is bounded.

\begin{proposition}[Gromov]
$$
\|M\|=\sup \left\{\Omega[M]  \mid \Omega \in H^n(M,\R) \, \text{with} \,\|\Omega\|_\infty=1 \right\}.
$$
In particular, if $\Omega_M$ denotes the dual fundamental class of $M$, then
$$
\|M\|=(\|\Omega_M\|_\infty)^{-1}.
$$
\end{proposition}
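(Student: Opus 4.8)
The plan is to establish the two inequalities $\|M\| \geq \Omega[M]$ for every $\Omega$ with $\|\Omega\|_\infty = 1$, and conversely the existence of a class $\Omega$ achieving (or approaching) $\Omega[M] = \|M\|$, by a direct pairing argument between singular chains and cochains. The key observation throughout is the elementary estimate: if $z = \sum r_i \sigma_i$ is a real singular cycle and $\omega$ a real singular cochain, then
$$
|\omega(z)| = \left| \sum_i r_i \, \omega(\sigma_i) \right| \leq \left( \sum_i |r_i| \right) \sup_\sigma |\omega(\sigma)|.
$$

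\emph{First inequality.} I would fix $\Omega \in H^n(M,\R)$ with $\|\Omega\|_\infty = 1$ and let $z$ be any real cycle representing $[M]$. Choose a cocycle $\omega$ representing $\Omega$ with $\sup_\sigma |\omega(\sigma)|$ arbitrarily close to $1$. Since $\omega(z)$ depends only on the homology class of $z$ and the cohomology class of $\omega$ (by Stokes/the Kronecker pairing being well-defined on (co)homology), we have $\omega(z) = \Omega[M]$. The displayed estimate then gives $\Omega[M] \leq (\ell_1\text{-norm of } z) \cdot \sup_\sigma |\omega(\sigma)|$; taking the infimum over $z$ and letting $\sup_\sigma |\omega(\sigma)| \to 1$ yields $\Omega[M] \leq \|M\|$. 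Hence $\sup\{\Omega[M] : \|\Omega\|_\infty = 1\} \leq \|M\|$, and in particular $\Omega_M[M] = \|M\| \cdot \|\Omega_M\|_\infty \leq \|M\|$ gives one half of the ``in particular'' statement once we know $\|\Omega_M\|_\infty$ is finite, i.e.\ $\Omega_M[M] \leq \|M\|$ forces $\|\Omega_M\|_\infty \geq (\|M\|)^{-1}$ provided $\|M\| \neq 0$; when $\|M\| = 0$ the first inequality already shows every such $\Omega$ has $\Omega[M] = 0$.

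\emph{Reverse inequality.} This is the substantive direction and I expect it to be the main obstacle, since it requires producing a \emph{bounded} cocycle dual to $[M]$ with controlled sup-norm. The standard approach is a Hahn--Banach argument: on the space $Z_n$ of real singular $n$-cycles equipped with the $\ell_1$-norm, consider the closed subspace $B_n$ of boundaries, and note that $\|M\| = \operatorname{dist}_{\ell_1}([M], B_n)$ is the distance from any fixed representative of $[M]$ to $B_n$. If $\|M\| > 0$ one applies Hahn--Banach to separate: there is a bounded linear functional on $C_n$ (the singular chains), i.e.\ a singular cochain $\omega$, vanishing on $B_n$ (hence a cocycle) and on degenerate issues, with operator norm $(\|M\|)^{-1}$ with respect to $\ell_1$ and with $\omega(z) = 1$ for $z$ representing $[M]$. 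The operator norm of $\omega$ on $(C_n, \ell_1)$ is exactly $\sup_\sigma |\omega(\sigma)|$, so $\|\Omega\|_\infty \leq (\|M\|)^{-1}$ for $\Omega = [\omega]$, and after rescaling $\Omega$ to have $\|\Omega\|_\infty = 1$ we get $\Omega[M] \geq \|M\|$. Combined with the first inequality this proves the supremum equals $\|M\|$ and is attained, and evaluating at $\Omega_M$ gives $\|\Omega_M\|_\infty = (\|M\|)^{-1}$. The case $\|M\| = 0$ should be handled separately, where the statement reads $0 = \sup\{\cdots\}$ and $\|\Omega_M\|_\infty = \infty$, consistent with the convention; I would remark that this is exactly the content of the first inequality plus the fact that $\|M\| = 0$ means $[M]$ lies in the $\ell_1$-closure of $B_n$, so no bounded cocycle can pair nontrivially with it.

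The care points are: (i) ensuring the Hahn--Banach functional is genuinely a cocycle (vanishing on all boundaries, which is automatic from vanishing on $B_n$) and that its behavior on degenerate simplices doesn't inflate the sup-norm — this is handled by working with the normalized complex or observing degenerate simplices contribute nothing to cycles representing $[M]$; (ii) verifying that the $\ell_1$-operator norm of a cochain equals its supremum over simplices, which is immediate since simplices form an isometric basis of the $\ell_1$-space of chains; and (iii) the passage from ``cochain with operator norm $(\|M\|)^{-1}$'' to the cohomological quantity $\|\Omega\|_\infty$, which only requires that we may always choose the infimizing representative, giving $\|\Omega\|_\infty \leq (\|M\|)^{-1}$ and hence equality after combining both bounds.
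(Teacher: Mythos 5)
Your proposal is correct and follows essentially the same route as the paper: the easy direction by the pairing estimate $\Omega[M]=\omega(c)\leq\|c\|_1\|\omega\|_\infty$, and the reverse direction via Hahn--Banach applied to the distance $d_{\ell_1}(c,\partial C_{n-1}(M,\R))=\|M\|$, producing a cocycle of operator norm $\|M\|^{-1}$ that pairs to $1$ with $[M]$. Your extra remarks on the $\|M\|=0$ case and on degenerate simplices are consistent with (and slightly more careful than) the paper's treatment.
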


\begin{proof}
For any real singular cycle $c=\sum_{i=1}^k r_i \sigma_i$ representing the class $[M]$, and any real singular cocycle $\omega$ representing a cohomological class $\Omega$ of dimension $n$, we have
$$
\Omega[M]=\omega(c)\leq \sum_{i=1}^k |r_i||\omega (\sigma_i)|\leq \sum_{i=1}^k |r_i|\cdot \|\omega\|_\infty,
$$
where $\|\omega\|_\infty:= \sup_\sigma |\omega(\sigma)|$ denotes the $\ell_\infty$-norm on the space of real cochains of $M$.
So  $\Omega[M]\leq \|M\|$ for any cohomological class $\Omega$ of unit $\ell_\infty$-norm.
If $\|M\|=0$ this concludes the proof.

If $\|M\|>0$, in the other direction, recall that the $\ell_1$-norm  on real chains is dual to the $\ell_\infty$-norm on real cochains. Let $c$ be a cycle representing $[M]$. According to the Hahn-Banach theorem there exists a linear form $\omega$ such that
\begin{itemize}
\item $\omega(c)=1$ ;
\item $\omega_{\mid \partial C_{n+1}(M,\R)} =0$ ;
\item $\|\omega\|_\infty= (d_{\|\cdot\|_1}(c,\partial C_{n+1}(M,\R)))^{-1}=\|M\|^{-1}$
\end{itemize}
where $C_{n+1}(M,\R)$ denotes the set of $(n+1)$-chains.
Therefore $\Omega:=[\omega]/ \|\omega\|_\infty$ is a cocycle with $ \|\Omega\|_\infty\leq 1$ such that $\Omega[M] =\|M\|$ which concludes the proof.
\end{proof}

If $g$ is a Riemannian metric on $M$, we could define the dual fundamental class using the following cocycle: for any simplex $\sigma$ set
$$
\omega(\sigma)={\vol(\sigma,g) \over \vol(M,g)}.
$$
Of course $[\omega]=\Omega_M$, but unfortunately $\|\omega\|_\infty=\infty$.
In the case of a hyperbolic manifold, we can fix this by using the straight operator. More precisely, if $(M,hyp)$ is hyperbolic, we define a slightly different cocycle $\omega_{hyp}$ using the formula
$$
\omega_{hyp}(\sigma)={\vol((\tilde{\sigma})_{st},\widetilde{hyp}) \over \vol(M,hyp)}
$$
 where $\tilde{\sigma}$ denotes any lift of $\sigma$ to $\widetilde{M}$.
It is straightforward to check that $[\omega_{hyp}]=\Omega_M$ and that 
$$
\|\omega_{hyp}\|_\infty={\mathcal{V}_n \over \vol(M,hyp)}={1 \over \|M\|}.
$$
The corresponding lifted cochain also enjoys the following properties.
\begin{proposition}
The $\pi_1M$-invariant cochain $\pi^\ast \omega_{hyp}$ is straight, that is for any simplex $\tilde{\sigma}$ 
$$
\pi^\ast \omega_{hyp}(\tilde{\sigma})=\pi^\ast \omega_{hyp}\left(\tilde{\sigma}_{st}\right).
$$
Equivalently,  the value $\pi^\ast \omega_{hyp}(\tilde{\sigma})$ only depends of $(y_0,\ldots,y_n)$, the set of vertices of $\tilde{\sigma}$.
Moreover, the resulting function
\begin{eqnarray*}
\pi^\ast \omega_{hyp} : \widetilde{M}^{n+1} & \to & \R\\
(y_0,\ldots,y_n) & \mapsto & \pi^\ast \omega_{hyp}(y_0,\ldots,y_n)
\end{eqnarray*}
is continuous and in particular Borel.
\end{proposition}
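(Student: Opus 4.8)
The plan is to unwind the definition of $\pi^\ast\omega_{hyp}$ and then exploit two elementary properties of the straight operator: it remembers only the ordered tuple of vertices of a simplex, and it is idempotent. First I would note that any singular simplex $\sigmatil\colon\Delta^n\to\Mtil$ is itself a lift of $\pi\circ\sigmatil$, so directly from the definition of $\omega_{hyp}$,
$$
\pi^\ast\omega_{hyp}(\sigmatil)=\omega_{hyp}(\pi\circ\sigmatil)={\vol\big((\sigmatil)_{st},\widetilde{hyp}\big)\over\vol(M,hyp)}
$$
(the choice of lift being irrelevant, since two lifts differ by a deck transformation, which is a $\widetilde{hyp}$-isometry, commutes with straightening and preserves volume). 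The inductive definition of a straight simplex as an iterated $\widetilde{hyp}$-geodesic cone shows, by induction on the dimension, that $(\sigmatil)_{st}$ depends only on the images $y_0=\sigmatil(e_0),\dots,y_n=\sigmatil(e_n)$ of the vertices of $\Delta^n$. This already yields the second assertion of the proposition, and the displayed identity exhibits $\pi^\ast\omega_{hyp}$ as a genuine function of $(y_0,\dots,y_n)\in\Mtil^{n+1}$.

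For the straightness identity I would invoke idempotence of the straight operator: if $\tilde\tau$ is a straight simplex, then by uniqueness $\tilde\tau$ is the straight simplex carried by its own vertex set, so $(\tilde\tau)_{st}=\tilde\tau$. Taking $\tilde\tau=(\sigmatil)_{st}$ gives $\big((\sigmatil)_{st}\big)_{st}=(\sigmatil)_{st}$, and applying the formula above with $(\sigmatil)_{st}$ in place of $\sigmatil$ yields
$$
\pi^\ast\omega_{hyp}\big((\sigmatil)_{st}\big)={\vol\big(((\sigmatil)_{st})_{st},\widetilde{hyp}\big)\over\vol(M,hyp)}={\vol\big((\sigmatil)_{st},\widetilde{hyp}\big)\over\vol(M,hyp)}=\pi^\ast\omega_{hyp}(\sigmatil),
$$
which is the claimed straightness.

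It remains to prove continuity of $(y_0,\dots,y_n)\mapsto\vol\big([y_0,\dots,y_n]_{st},\widetilde{hyp}\big)$ on $\Mtil^{n+1}$, where $[y_0,\dots,y_n]_{st}$ denotes the straight simplex with the prescribed ordered vertices. Here I would fix an isometry of $(\Mtil,\widetilde{hyp})$ with $\HH^n$ and write down the explicit parametrization $s(y_0,\dots,y_n)\colon\Delta^n\to\HH^n$ obtained by iterating the construction ``move along the geodesic towards the last vertex'': since geodesics in $\HH^n$ depend smoothly on their endpoints, $s$ is continuous on $\Delta^n$ and smooth on its interior, jointly in all variables, with first derivatives in the barycentric coordinates uniformly bounded as long as the vertices stay in a fixed compact set. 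Consequently $\vol\big([y_0,\dots,y_n]_{st},\widetilde{hyp}\big)=\int_{\Delta^n}s(y_0,\dots,y_n)^\ast\,dV$, with $dV$ the hyperbolic volume form, is continuous in $(y_0,\dots,y_n)$ by dominated convergence; dividing by the constant $\vol(M,hyp)$ concludes, and a continuous function is automatically Borel. The one point requiring care, and the step I expect to be the only real obstacle, is checking that the iterated-cone parametrization $s$ behaves well across the faces of $\Delta^n$ where the cone construction formally divides by $1-t_n$; this is a routine verification, the relevant geodesic segments converging uniformly to the apex as $t_n\to 1$, so the asserted bounds on $s$ and its derivatives persist up to the boundary.
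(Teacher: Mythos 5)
Your argument is correct. Note that the paper itself states this proposition without any proof (it is treated as a routine consequence of the definitions), so there is no in-paper argument to compare against; what you have written is exactly the verification the authors leave to the reader. The three steps are all sound: the unwinding of $\pi^\ast\omega_{hyp}(\tilde\sigma)$ via the observation that $\tilde\sigma$ is a lift of $\pi\circ\tilde\sigma$ (with independence of the lift coming from deck transformations being $\widetilde{hyp}$-isometries that commute with the geodesic-cone construction), the idempotence of the straight operator for the straightness identity, and the continuity of the hyperbolic volume of a straight simplex as a function of its ordered vertex tuple. On the last point, your iterated-cone parametrization does work, and you correctly flag the apparent singularity of the cone parameter near the apex as the only spot needing care; if you want to avoid that bookkeeping entirely, you can instead use the hyperboloid (Minkowski) model, in which the straight simplex with vertices $y_0,\dots,y_n$ is the radial projection to $\HH^n$ of the Euclidean simplex spanned by the corresponding points of $\mathbb{R}^{n,1}$ -- this gives a parametrization that is visibly smooth in the vertices jointly with the barycentric coordinates, and continuity of the (signed) volume integral follows at once.
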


This is here that Gromov found that such nice properties of $\pi^\ast \omega_{hyp}$ could be used to define an alternative notion of simplicial volume. In the next paragraph we present this definition, but it is important to already underline that this alternative simplicial volume coincides with the standard one, albeit the proof of their equivalence seems quite technical.\\

\subsection{Alternative definition of simplicial volume via straight invariant cocycles.}
 \begin{definition}
 A straight invariant fundamental cocycle is a cochain $\tilde{\omega}$ of $C^n(\widetilde{M};\R)$ with the following properties:\\
 
\noindent a) \underline{Invariance}:  $\tilde{\omega}$ is $\pi_1M$-invariant,\\

\noindent b) \underline{Fundamental coc}y\underline{cle}:  the only cochain $\omega$ on $M$ satisfying $\pi^\ast \omega=\tilde{\omega}$ is a cocycle representing the dual fundamental class of $M$, that is $[\omega]=\Omega_M$. In particular $\tilde{\omega}$ is a cocycle.\\

\noindent c) \underline{Strai}g\underline{ht and Borel}:  $\tilde{\omega}$ is straight and the induced real valued function on $\Mtil^{n+1}$ is Borel.\\

The alternative simplicial volume is then defined by
$$
\|M\|'={1 \over \inf \|\tilde{\omega}\|_\infty}
$$
where the infimum is taken over all straight invariant fundamental cocycles.
 \end{definition}
 
In particular $\|M\|'=0$ if $M$ does not admit such a straight invariant fundamental cocycle of bounded type.
In the case of a hyperbolic manifold, we can compute the alternative simplicial volume as follows.

\begin{theorem}\label{th:GT}
If $M$ is hyperbolic, then
$$
\|M\|'={\vol(M,hyp) \over \mathcal{V}_n}.
$$
\end{theorem}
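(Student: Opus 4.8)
The plan is to prove the two inequalities $\|M\|' \geq \vol(M,hyp)/\mathcal{V}_n$ and $\|M\|' \leq \vol(M,hyp)/\mathcal{V}_n$ separately, assembling the material already collected: the Gromov--Thurston equality $\mathcal{V}_n\|M\| = \vol(M,hyp)$, the duality principle $\|\Omega_M\|_\infty = \|M\|^{-1}$, and the two propositions describing the cocycle $\omega_{hyp}$ and its lift $\pi^\ast\omega_{hyp}$. As a preliminary I would record that pullback along $\pi$ is a bijection between real singular $n$-cochains on $M$ and $\pi_1M$-invariant real singular $n$-cochains on $\Mtil$, and that it preserves the $\ell_\infty$-norm. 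Indeed, every singular simplex of $M$ has the form $\pi\circ\tilde\sigma$ for some simplex $\tilde\sigma$ of $\Mtil$ (lift using that $\Delta^n$ is simply connected), so $\pi^\ast$ is injective and $\|\pi^\ast\omega\|_\infty = \|\omega\|_\infty$; conversely any $\pi_1M$-invariant cochain $\tilde\omega$ descends via $\omega(\sigma):=\tilde\omega(\tilde\sigma)$, the value being independent of the chosen lift by invariance.

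For the bound $\|M\|' \geq \vol(M,hyp)/\mathcal{V}_n$ it suffices to exhibit one competitor in the infimum defining $\|M\|'$, and $\pi^\ast\omega_{hyp}$ is the obvious candidate. By the two propositions recalled above it is $\pi_1M$-invariant, straight with Borel induced function on $\Mtil^{n+1}$, and the unique cochain it descends from is $\omega_{hyp}$, which satisfies $[\omega_{hyp}] = \Omega_M$; thus $\pi^\ast\omega_{hyp}$ is a straight invariant fundamental cocycle. Since $\|\pi^\ast\omega_{hyp}\|_\infty = \|\omega_{hyp}\|_\infty = \mathcal{V}_n/\vol(M,hyp)$, we get $\inf\|\tilde\omega\|_\infty \leq \mathcal{V}_n/\vol(M,hyp)$, hence $\|M\|' = (\inf\|\tilde\omega\|_\infty)^{-1} \geq \vol(M,hyp)/\mathcal{V}_n$.

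For the reverse inequality I would take an arbitrary straight invariant fundamental cocycle $\tilde\omega$ and use descent: property (b) of the definition produces a cochain $\omega$ on $M$ with $\pi^\ast\omega = \tilde\omega$ and $[\omega] = \Omega_M$, and the norm identity above gives $\|\tilde\omega\|_\infty = \|\omega\|_\infty$. Since $\omega$ is an admissible representative of the dual fundamental class, the definition of $\|\cdot\|_\infty$ on $H^n(M,\R)$ yields $\|\omega\|_\infty \geq \|\Omega_M\|_\infty$, which by the duality principle equals $\|M\|^{-1}$, and hence by Gromov--Thurston equals $\mathcal{V}_n/\vol(M,hyp)$. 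Taking the infimum over all such $\tilde\omega$ gives $\inf\|\tilde\omega\|_\infty \geq \mathcal{V}_n/\vol(M,hyp)$, i.e. $\|M\|' \leq \vol(M,hyp)/\mathcal{V}_n$. Combining the two bounds proves the theorem.

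I do not expect a serious obstacle here: in the hyperbolic case one bypasses the technical general equivalence $\|M\|'=\|M\|$ alluded to earlier, because the explicit cocycle $\omega_{hyp}$ already realizes one half of the argument. The only points requiring a little care are the verification that $\pi^\ast$ is an $\ell_\infty$-isometric bijection between cochains on $M$ and $\pi_1M$-invariant cochains on $\Mtil$, and the observation that the ``fundamental cocycle'' hypothesis is exactly what is needed to insert $\omega$ into the cohomological estimate $\|\omega\|_\infty \geq \|\Omega_M\|_\infty$.
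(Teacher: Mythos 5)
Your argument is logically valid, but it takes a genuinely different route from the paper's, and the difference is worth flagging. Your first inequality (exhibiting $\pi^\ast\omega_{hyp}$ as a competitor), the descent bijection between $\pi_1M$-invariant cochains on $\Mtil$ and cochains on $M$, and the $\ell_\infty$-isometry of $\pi^\ast$ all match the paper. The divergence is in the reverse inequality: you carry it entirely by the chain $\|\omega\|_\infty \geq \|\Omega_M\|_\infty = \|M\|^{-1} = \mathcal{V}_n/\vol(M,hyp)$, and the last equality uses the direction $\|M\| \leq \vol(M,hyp)/\mathcal{V}_n$ of the Gromov--Thurston theorem. Note that the paper proves only the opposite direction ($\mathcal{V}_n\|M\| \geq \vol(M,hyp)$, via straightening) and explicitly says that this is all it will recall; the hard direction is merely cited. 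The paper's own proof of Theorem \ref{th:GT} supplies precisely that missing content, transposed to the dual setting: it smears a regular straight simplex $\sigmatil_D$ with edge length $D$ over $M$ using the Haar measure on $\text{Isom}_+(\HH^n)$, obtains a measured cycle $\nu_D$ representing $\vol(\sigmatil_D,\widetilde{hyp})\cdot[M]$ with total variation $\vol(M,hyp)$, evaluates $\omega$ against it, and lets $D\to\infty$. Since Thurston's smearing construction is also how the hard direction of Gromov--Thurston is classically established, the two arguments contain essentially the same mathematics; you have packaged the substantive step inside a citation, whereas the paper keeps the account self-contained. If quoting the full Gromov--Thurston equality is permitted, your proof is a legitimate shortcut; if the section is meant to be self-contained (as its framing suggests), you still owe the smearing argument, and that is where the real work of this theorem lives.
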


\begin{proof}
First observe that we already proved that 
$$
(\|M\|')^{-1}=\inf \|\tilde{\omega}\|_\infty \leq \|\pi^\ast \omega_{hyp}\|_\infty={\mathcal{V}_n \over \vol(M,hyp)}.
$$
In the reverse direction, consider a straight invariant fundamental cocycle $\tilde{\omega}$ of $\Mtil$ and denote by $\omega$ the corresponding cocycle of $M$ such that $\pi^\ast \omega=\tilde{\omega}$ and  $[\omega]=\Omega_M$. 
Following \cite[section 2.2]{Gro82} and \cite[Theorem 6.2]{Thu78} pick a straight simplex $\sigmatil_D$  of $\widetilde{M}=\HH^n$ all of whose edges have length equal to $D$ endowed with the natural orientation induced by $\Mtil$. Denote by $\tilde{\mu}$ the Haar measure on the group $\text{Isom}_+(\HH^n)$ of orientation-preserving isometries normalized so that the measure of isometries taking a point in $\HH^n$ to a region $R \subset \HH^n$ is the hyperbolic volume of $R$. This measure being invariant under both right and left multiplication, it descends to a measure denoted by $\mu$ on the quotient space $P(M)=\pi_1M \setminus \text{Isom}_+(\HH^n)$ such that $\int_{P(M)} d\mu=\vol(M,hyp)$. Denote by $F_D$ the set of simplices of $M$ obtained by projecting all straight simplices of $\HH^n$ all of whose edges have length $D$ with their natural orientation. The map
\begin{eqnarray*}
\Psi : P(M) & \to & F_D\\
\pi_1 M \varphi & \mapsto & \pi \circ \varphi \circ \sigmatil_D
\end{eqnarray*}
pushes forward the measure $\mu$ to a measure on $F_D$ denoted by $\text{smear}_M(\sigmatil_D)$. Now pick any reflection $r$ of $\HH^n$ associated to an hyperplane and set the measure on $F_D$ defined by
$$
\nu_D={1 \over 2} (\text{smear}_M(\sigmatil_D)-\text{smear}_M(r\circ \sigmatil_D)).
$$
By construction its total variation satisfies
$$
\|\nu_D\|=\|\text{smear}_M(\sigmatil_D)\|=\vol(M,hyp).
$$
The measure $\nu_D$ is a cycle of the measured homology of $M$ (see \cite[Section 6]{Thu78}): for any cocycle $\zeta \in C^n(M;\R)$ the integral 
$$
\int_{F_D} \zeta(\sigma) d \nu_D
$$
depends only on the cohomological class of $\zeta$ and we set by duality
$$
\langle [\nu_D],[\zeta]\rangle:=\int_{F_D} \zeta(\sigma) d \nu_D.
$$ 
The corresponding homological class $[\nu_D]$ is thus a multiple of the fundamental class. Because
$$
{1 \over \|\nu_D\|} \int_{F_D} \zeta_{hyp}(\sigma) d\nu_D = \vol(\sigmatil_D,\widetilde{hyp})
$$
where $\zeta_{hyp}$ denotes the cocycle induced by the hyperbolic volume form on $M$, we see that
$$
[\nu_D]= \vol(\sigmatil_D,\widetilde{hyp}) \cdot[M].
$$
From the fact that $[\omega]=\Omega_M$ we get that
$$
\vol(\sigmatil_D,\widetilde{hyp})=\int_{F_D} \omega(\sigma) d \nu_D \leq \|\tilde{\omega}\|_\infty \cdot \vol(M,hyp).
$$
By letting $D \to \infty$ we deduce that
$$
{\mathcal{V}_n \over \vol(M,hyp)} \leq \|\tilde{\omega}\|_\infty
$$
as $\lim_{D\to \infty} \vol(\sigmatil_D,\widetilde{hyp})=\mathcal{V}_n$. This proves the reverse inequality.
\end{proof}

In particular, the two simplicial volumes $\|\cdot\|'$ and $\|\cdot\|$ coincide for hyperbolic manifolds.
In \cite[section3]{Gro82}, Gromov proved that they are indeed always equal.
But his proof \textquotedblleft requires a bit of abstract machinery\textquotedblright, as he himself confessed (see \cite[p.30]{Gro82}). Because we don't need this equivalence between both definitions, we work from now with the alternative definition of simplicial volume that we abusively denote by $\|\cdot\|$.

\medskip

\begin{remark}
We assumed at the beginning of this section that $M$ was oriented. In fact simplicial volume does not depend on a specific choice of orientation on $M$. So the notion of simplicial volume of a closed {\it orientable} manifold is well defined. Furthermore, if $M$ is now supposed to be {\it non-orientable}, simply set
 $$
\|M\|={1 \over 2} \|M'\| 
 $$ 
 where $M'$ denotes the orientable double cover of $M$, and observe that Theorem \ref{th:GT} is still valid.
 \end{remark}
 
 \bigskip

\section{The smoothing inequality}\label{sec:smoothing}

Using the fact that straight invariant Borel cochains can be interpreted as functions with variables in $\Mtil$, Gromov defines a diffusion process associated to a family of probability measures living in the universal covering space of the manifold.\\

\subsection{Diffusion of straight invariant fundamental cocycles via a smoothing operator}
Denote by $\mathcal{M}$ the Banach space of finite measures $\mu$ on the universal cover $\Mtil$ of $M$, and by $\mathcal{P} \subset \mathcal{M}$ the subset of probability measures. We endow $\mathcal{M}$ with the usual norm
$$
\|\mu\|=\int_{\widetilde{M}}|\mu|
$$
where $|\mu|$ denotes the total variation.
Any straight invariant fundamental  cocycle $\tilde{\omega}$ uniquely extend to a $(n+1)$-linear function on $\mathcal{M}^{n+1}$ as follows 
$$
\tilde{\omega}(\mu_0,\ldots,\mu_n)=\int_{\Mtil^{n+1}} \tilde{\omega}(y'_0,\ldots,y'_n)d\mu_0(y'_0)\ldots d\mu_n(y'_n).
$$
Observe that
$$
\|\tilde{\omega}\|_{\infty}=\sup_{y_0,\ldots,y_n\in \Mtil}|\tilde{\omega}(y_0,\ldots,y_n)|=\sup_{\mu_i \in \mathcal{P}}|\tilde{\omega}(\mu_0,\ldots,\mu_n)|.
$$

\begin{definition}
A smoothing operator is a smooth $\pi_1 M$-equivariant function 
$$
\Sm : \Mtil \to \mathcal{P}.
$$
\end{definition}

The idea is to replace points of $\Mtil$ by probability measures, and to observe the effect of this diffusion on straight invariant fundamental cocycles.

\begin{theorem}
Given a smoothing operator $\Sm$ and a straight invariant fundamental cocycle $\tilde{\omega}$, the diffused cochain $\Sm^\ast \tilde{\omega}$ defined by
\begin{eqnarray*}
\Sm^\ast \tilde{\omega}(y_0,\ldots,y_n)& =&\tilde{\omega}\left({\Sm(y_0)},\ldots,{\Sm(y_n)}\right)
\end{eqnarray*}
is also a straight invariant fundamental cocycle.
\end{theorem}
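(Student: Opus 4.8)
The plan is to verify the three defining properties of a straight invariant fundamental cocycle (invariance, fundamental cocycle, straight and Borel) for $\Sm^\ast \tilde{\omega}$ one at a time, using the multilinear extension of $\tilde\omega$ to measures together with the equivariance and smoothness of $\Sm$. Since $\Sm^\ast\tilde\omega$ is defined as a function of the vertices $(y_0,\ldots,y_n)$, it is straight by construction; the content lies in checking invariance and the fundamental-cocycle property.

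First I would treat invariance. For $\gamma\in\pi_1 M$, the pushforward of $\Sm(y_i)$ under $\gamma$ equals $\Sm(\gamma y_i)$ by $\pi_1 M$-equivariance of $\Sm$. Since $\tilde\omega$ is $\pi_1 M$-invariant on $\Mtil^{n+1}$, its multilinear extension is invariant under the simultaneous pushforward of all $n+1$ measures by $\gamma$, because the change of variables $y'_i\mapsto\gamma y'_i$ in the integral $\int_{\Mtil^{n+1}}\tilde\omega(y'_0,\ldots,y'_n)\,d\mu_0\cdots d\mu_n$ leaves the integrand invariant. Hence $\Sm^\ast\tilde\omega(\gamma y_0,\ldots,\gamma y_n)=\tilde\omega(\gamma_\ast\Sm(y_0),\ldots,\gamma_\ast\Sm(y_n))=\tilde\omega(\Sm(y_0),\ldots,\Sm(y_n))=\Sm^\ast\tilde\omega(y_0,\ldots,y_n)$. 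For the straight-and-Borel clause, smoothness of $\Sm$ (as a map into $\mathcal{P}$ with its weak topology) together with boundedness and Borel measurability of the function $\tilde\omega$ on $\Mtil^{n+1}$ makes $(y_0,\ldots,y_n)\mapsto\int\tilde\omega\,d\Sm(y_0)\cdots d\Sm(y_n)$ continuous, hence Borel, by dominated convergence.

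Next comes the fundamental-cocycle property, which I expect to be the main obstacle. By invariance, $\Sm^\ast\tilde\omega$ descends to a cochain on $M$; I must show this cochain is a cocycle representing $\Omega_M$. The cleanest route is to observe that the smoothing operator induces a chain-level operation: there is a natural homotopy, built by interpolating along geodesics between a point and the barycenter of its measure (or by a standard prism/Eilenberg–Zilber-type construction applied to the equivariant family of measures), between the identity on $C_\ast(M;\R)$ and the map that replaces a simplex by its $\Sm$-smeared average. Dualizing, $\Sm^\ast$ acts as the identity on cohomology, so $[\,\omega_{\Sm^\ast\tilde\omega}\,]=[\omega_{\tilde\omega}]=\Omega_M$. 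Alternatively, and perhaps more directly, one can test against the smeared cycles $\nu_D$ from the proof of Theorem \ref{th:GT}: pairing $\Sm^\ast\tilde\omega$ with $[M]$ reduces, via the multilinear extension and Fubini, to an integral of $\tilde\omega$ against a smeared-then-diffused measure, which one identifies with the same multiple of $[M]$ that $\tilde\omega$ itself produces. The delicate points to handle carefully are that $\Sm^\ast\tilde\omega$ is genuinely a cocycle (i.e. $\delta\Sm^\ast\tilde\omega=0$) — this should follow because $\Sm^\ast$ commutes with the coboundary operator, the coboundary of a multilinearly-extended cochain being again the multilinear extension of the coboundary — and that the uniqueness clause in property (b) (the only $\omega$ on $M$ with $\pi^\ast\omega=\Sm^\ast\tilde\omega$ represents $\Omega_M$) holds, which is automatic once invariance is known since $\pi^\ast$ is injective onto its image of $\pi_1 M$-invariant cochains.

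Finally, as a byproduct worth recording, the same multilinear-extension identity gives the norm bound $\|\Sm^\ast\tilde\omega\|_\infty\le\|\tilde\omega\|_\infty$, since each $\Sm(y_i)\in\mathcal P$ and $\sup_{\mu_i\in\mathcal P}|\tilde\omega(\mu_0,\ldots,\mu_n)|=\|\tilde\omega\|_\infty$; this inequality is presumably the point of the construction and should be stated alongside the theorem even though it is not part of the present statement.
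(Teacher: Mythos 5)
Your treatment of invariance, Borel measurability, and the cocycle identity matches the paper's: invariance comes from equivariance of $\Sm$ plus the change of variables $y'_i\mapsto\gamma y'_i$, and $\delta(\Sm^\ast\omegatil)=\Sm^\ast(\delta\omegatil)=0$ works exactly because each omitted $\Sm(y_i)$ is a \emph{probability} measure, so the integral over $\Mtil^{n+1}$ can be rewritten as one over $\Mtil^{n+2}$ and the alternating sum vanishes inside the integral — you should make that normalization explicit, but the idea is right.

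The genuine gap is the step you yourself flag as the main obstacle: showing that the descended cocycle still represents $\Omega_M$, i.e.\ evaluates to $1$ on $[M]$. Neither of your two routes is a proof. The first (a homotopy "interpolating along geodesics between a point and the barycenter of its measure") does not make sense for a general closed manifold: a probability measure on $\Mtil$ has no canonical barycenter and there is no preferred geodesic interpolation without curvature hypotheses; the "prism construction applied to the family of measures" could in principle be made algebraic (a cochain homotopy $H$ with $\delta H+H\delta=\Sm^\ast-\mathrm{id}$ built from mixed tuples $(\Sm(y_0),\ldots,\Sm(y_j),\delta_{y_j},\ldots)$), but you neither write the formula nor verify the identity, and the geometric version you lead with fails. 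The second route (pairing with the smeared cycles $\nu_D$) is only available when $M$ is hyperbolic, whereas the theorem is stated for an arbitrary closed oriented manifold. The paper's argument is more elementary and avoids both issues: fix a triangulation $\mathcal{T}$ whose simplices sum to the fundamental cycle, lift it to a fundamental domain with vertices $y_1,\ldots,y_k$ carried around by deck transformations $g_\sigma(i)$, and observe that since $\omegatil$ is a straight fundamental cocycle the quantity $\sum_{\sigma\in\mathcal{T}}\omegatil\left(g_\sigma(0)\cdot y'_{f_\sigma(0)},\ldots,g_\sigma(n)\cdot y'_{f_\sigma(n)}\right)$ equals $1$ for \emph{every} choice of $(y'_1,\ldots,y'_k)\in\Mtil^k$ (moving the vertices preserves the cycle structure and the homology class). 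Integrating this constant against $d\Sm(y_1)\cdots d\Sm(y_k)$ and using Fubini then gives $\sum_{\sigma\in\mathcal{T}}\Sm^\ast\omega(\sigma)=1$ directly. Some such argument — or a fully written-out algebraic prism homotopy — is needed to close your proof.
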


\begin{proof}
It is clear that the straight (by definition !) cochain $\Sm^\ast \tilde{\omega}$ is $\pi_1M$-invariant and Borel.
To check that it is a fundamental cocycle, we first prove that $\Sm^\ast \omegatil$ is a cocycle. Indeed for any singular $(n+1)$-simplex $\sigmatil$ with vertices $(y_0,\ldots,y_{n+1})$ we have
\begin{eqnarray*}
\Sm^\ast  \omegatil(\partial \sigmatil)&=&\sum_{i=0}^{n+1} (-1)^i \Sm^\ast \omegatil(y_0,\ldots,\hat{y}_i,\ldots,y_{n+1})\\
&=& \sum_{i=0}^{n+1} (-1)^i \left(\int_{\Mtil^{n+1}} \omegatil(y'_0,\ldots,\hat{y}'_i,\ldots,y'_{n+1}) \, \,d\Sm(y_0) \ldots \widehat{d\Sm(y_i)} \ldots d\Sm(y_{n+1})\right)\\
&=& \sum_{i=0}^{n+1} (-1)^i \left(\int_{\Mtil^{n+2}} \omegatil(y'_0,\ldots,\hat{y}'_i,\ldots,y'_{n+1}) \, \, d\Sm(y_0) \ldots {d\Sm(y_i)} \ldots d\Sm(y_{n+1})\right)\\
&=&  \int_{\Mtil^{n+2}} \underbrace{\sum_{i=0}^{n+1} (-1)^i \omegatil(y'_0,\ldots,\hat{y'_i},\ldots,y'_{n+1})}_{=0} \, \, d\Sm(y_0) \ldots d\Sm(y_{n+1})\\
&=&0
\end{eqnarray*}
as $\omegatil$ is itself a cocycle. 

Now denote by $\Sm^\ast \omega$ the unique cocycle of $M$ such that $\pi^\ast (\Sm^\ast \omega)=\Sm^\ast \omegatil$. Fix a triangulation $\mathcal{T}$ of $M$ such that the cycle $\sum_{\sigma \in \mathcal{T}} \sigma$ defines the fundamental class of $M$. We can lift this cycle into a chain of $\Mtil$ as follows: for each $\sigma \in \mathcal{T}$ choose a lifted $n$-simplex denoted by $\sigmatil$ in such a way that the union of these lifted simplices gives a triangulation of the closure of some fundamental domain for the $\pi_1M$-action. 
Denote by $\{y_1,\ldots,y_k\}$ a subset of vertices of the chain $\sum_{\sigma \in \mathcal{T}} \sigmatil$ obtained by lifting all the vertices of $\mathcal{T}$. For all $\sigma \in \mathcal{T}$, there exists unique functions 
$$
f_\sigma : \{0,\ldots,n\} \to \{1,\ldots,k\}
$$
and 
$$
g_\sigma : \{0,\ldots,n\} \to \pi_1M 
$$
such that the vertices of $\sigmatil$ are described by the ordered set $\left(g_\sigma(0) \cdot y_{f_\sigma(0)},\ldots,g_\sigma(n) \cdot y_{f_\sigma(n)}\right)$. Observe that $\mathcal{T}$ being a triangulation $f_\sigma$ is one-to-one. Because
$$
\sum_{\sigma \in \mathcal{T}} \omega(\sigma)=\sum_{\sigma \in \mathcal{T}}  \omegatil\left(g_\sigma(0) \cdot y_{f_\sigma(0)},\ldots,g_\sigma(n) \cdot y_{f_\sigma(n)}\right)=1
$$
by assumption, we see that 
$$
\sum_{\sigma \in \mathcal{T}}  \omegatil\left(g_\sigma(0) \cdot y'_{f_\sigma(0)},\ldots,g_\sigma(n) \cdot y'_{f_\sigma(n)}\right)=1
$$
for any choice of $(y'_1,\ldots,y'_k) \in \Mtil^{k}$.
Then 
\begin{eqnarray*}
\sum_{\sigma \in \mathcal{T}}  \Sm^\ast \omega(\sigma)&=&\sum_{\sigma \in \mathcal{T}}  \Sm^\ast \omegatil(\sigmatil)\\
&=&\sum_{\sigma \in \mathcal{T}} \Sm^\ast \omegatil\left(g_\sigma(0) \cdot y_{f_\sigma(0)},\ldots,g_\sigma(n) \cdot y_{f_\sigma(n)}\right)\\
&=&\sum_{\sigma \in \mathcal{T}} \int_{\Mtil^{n+1}} \omegatil\left(y'_{f_\sigma(0)},\ldots,y'_{f_\sigma(n)}\right) \, \,d\Sm\left(g_\sigma(0) \cdot y_{f_\sigma(0)}\right)\ldots d\Sm\left(g_\sigma(n) \cdot y_{f_\sigma(n)}\right)\\
&=&\sum_{\sigma \in \mathcal{T}} \int_{\Mtil^{n+1}} \omegatil\left(g_\sigma(0) \cdot y'_{f_\sigma(0)},\ldots,g_\sigma(n) \cdot y'_{f_\sigma(n)}\right) \, \,d\Sm(y_{f_\sigma(0)})\ldots d\Sm(y_{f_\sigma(n)})\\
&=&\sum_{\sigma \in \mathcal{T}}  \int_{\Mtil^{k+1}}  \omegatil\left(g_\sigma(0) \cdot y'_{f_\sigma(0)},\ldots,g_\sigma(n) \cdot y'_{f_\sigma(n)}\right) \, \,d\Sm(y_1)\ldots d\Sm(y_k)\\
&=&\int_{\Mtil^{k+1}} \underbrace{\sum_{\sigma \in \mathcal{T}}  \omegatil\left(g_\sigma(0) \cdot y'_{f_\sigma(0)},\ldots,g_\sigma(n) \cdot y'_{f_\sigma(n)}\right)}_{=1}) \, \, d\Sm(y_1)\ldots d\Sm(y_k)=1.
\end{eqnarray*}

Thus $\Sm^\ast \omegatil$ is a fundamental cocycle.
\end{proof}

In particular we get the following.
\begin{corollary}
Given a smoothing operator $\Sm$,
$$
\|M\|={1 \over \inf \|\Sm^\ast \omegatil \|_\infty}
$$
where the infimum is taken over all straight invariant fundamental cocycles.
\end{corollary}

\begin{proof}
Simply observe that
$$
\|\Sm^\ast \tilde{\omega}\|_{\infty}\leq\|\tilde{\omega}\|_{\infty}
$$
for any straight invariant fundamental cocycle $\tilde{\omega}$.
\end{proof}

By defining $\Sm_{\delta}(y)=\delta_y$ for all $y$ in $\Mtil$ where $\delta_y$ denotes the Dirac function at $y$, we see that
$$
\Sm_{\delta}^\ast \omegatil=\omegatil
$$
for any straight cocycle, but $\Sm_{\delta}$ fails to be a smoothing operator as it is even not continuous. This example helps us to understand the meaning of {\it smoothing operator}: the cocycle $\omegatil$ is viewed as the singular object $\Sm_{\delta}^\ast \omegatil$ and we smooth it by replacing Dirac measures with probability measures depending smoothly on points.\\

\subsection{The smoothing inequality}

The main result we need is an inequality which links in presence of a Riemannian metric the Riemannian volume to the simplicial volume through smoothing operators. For this, given a Riemannian metric $g$ on $M$, we define the norm of the differential of $\Sm$ at some point $y \in \Mtil$ by
$$
\|d_y\Sm\|=\sup_\tau\|d_y\Sm(\tau)\|
$$
where $\tau$ runs over the unit tangent sphere $S_y\subset T_y \Mtil$ for the pulled-back metric $\gtil$.

\begin{theorem}[Gromov's smoothing inequality]\label{th:smoothing.inequality}
 Let $g$ be a  Riemannian metric on $M$, and $\Sm$ a smoothing operator such that 
 $$
\|d\Sm\|_\infty:=  \sup_{y\in \Mtil} \|d_y\Sm\|<\infty.
  $$
 Then
 $$
\|M\|\leq n! \|d \Sm\|_\infty^n \vol(M,g).
$$
\end{theorem}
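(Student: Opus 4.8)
The strategy is to estimate $\|M\|^{-1}=\inf\|\tilde\omega\|_\infty$ from below by exhibiting, for a suitable choice of straight invariant fundamental cocycle, a bound on $\|\Sm^\ast\tilde\omega\|_\infty$ in terms of $\|d\Sm\|_\infty$ and $\vol(M,g)$. More precisely, by the Corollary following the diffusion theorem, for \emph{any} straight invariant fundamental cocycle $\tilde\omega$ we have $\|M\|^{-1}\le\|\Sm^\ast\tilde\omega\|_\infty$ is false in the wrong direction; the right reading is $\|M\|\le (\inf\|\Sm^\ast\tilde\omega\|_\infty)^{-1}$, so it suffices to produce \emph{one} $\tilde\omega$ with $\|\Sm^\ast\tilde\omega\|_\infty$ small. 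The natural candidate is the cocycle $\tilde\omega$ induced by the Riemannian volume form of some fixed metric, or rather its straightening; the point is that $\Sm^\ast\tilde\omega(y_0,\dots,y_n)=\tilde\omega(\Sm(y_0),\dots,\Sm(y_n))$ is an integral of a volume cocycle against the product measure $\Sm(y_0)\otimes\cdots\otimes\Sm(y_n)$, and such integrals are controlled by how spread out the measures $\Sm(y_i)$ are.

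\textbf{Key steps.} First I would fix $y_0,\dots,y_n\in\Mtil$ and write $\Sm^\ast\tilde\omega(y_0,\dots,y_n)$ as $\int \tilde\omega(y_0',\dots,y_n')\,d\Sm(y_0)(y_0')\cdots d\Sm(y_n)(y_n')$, where $\tilde\omega$ is the straightened volume cocycle, so $\tilde\omega(y_0',\dots,y_n')$ is (up to the normalizing factor $1/\vol(M,g)$) the algebraic volume of the straight simplex on those vertices. Second, I would bound this volume: the straight $n$-simplex on vertices $y_0',\dots,y_n'$ has volume at most $\tfrac1{n!}$ times the product of the "edge lengths" emanating from $y_0'$ — more precisely, its volume is bounded by a product of distances, and integrating against the diffused measures replaces these distances by quantities controlled by the \emph{size} of the measures $\Sm(y_i)$. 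Third — and this is the crux — I would quantify the spread of $\Sm(y)$: since $\Sm$ is $\pi_1M$-equivariant and $\|d_y\Sm\|\le\|d\Sm\|_\infty=:L$, the measure $\Sm(y)$ is, roughly speaking, supported within distance $\sim L$ of the appropriate region, and the "mass times characteristic size" of the family $\{\Sm(y):y\in M\}$ — integrated over a fundamental domain — is comparable to $L^n\vol(M,g)$. Concretely, one considers $\int_M \Sm(y)\,dy$ as a measure on $\Mtil$ (it is $\pi_1M$-invariant, hence descends, with total mass $\vol(M,g)$ up to normalization), and the Lipschitz bound forces this measure to be absolutely continuous with density bounded in terms of $L^n$. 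Fourth, I would assemble these: plugging the volume bound into the integral, using Fubini to integrate the edge-distances against the diffused measures, and pairing against a fundamental cocycle cycle (a triangulation of $M$, as in the proof of the diffusion theorem) to extract the $1=\Omega_M[M]$ on one side, yields $1\le \|\Sm^\ast\tilde\omega\|_\infty^{\,?}\cdot(\text{stuff})$; tracking the normalization $\vol(M,g)$ and the $n!$ from the simplex volume, one lands on $\|M\|\le n!\,L^n\vol(M,g)$.

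\textbf{Main obstacle.} The delicate point is Step 3: making precise the passage from the pointwise Lipschitz bound $\|d_y\Sm\|\le L$ to a global measure-theoretic statement about $\int_M\Sm(y)\,dy$, and then threading this through the multilinear structure of $\tilde\omega$ so that exactly $n$ factors of $L$ (and the full $\vol(M,g)$, not a power of it) appear. One must be careful that $\Sm^\ast\tilde\omega$ is being estimated \emph{uniformly} in $(y_0,\dots,y_n)$ while the volume $\vol(M,g)$ enters only once through the normalization of the volume cocycle; the bookkeeping that reconciles "an $(n+1)$-fold integral against probability measures" with "a single copy of $\vol(M,g)$ and an $n$-th power of $L$" is where the inequality is really won or lost. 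A secondary technical point is justifying that the straightened volume form of $g$ (which need not be nonnegative, and whose unstraightened version has infinite sup norm) genuinely gives a legitimate straight invariant fundamental cocycle after diffusion — but this is handled exactly as in the hyperbolic case and in the diffusion theorem already proved, so I expect no real difficulty there.
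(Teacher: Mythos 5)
There is a fatal logical error in your plan before any analysis starts: the quantifier points the wrong way. With the definition $\|M\|=\left(\inf_{\tilde\omega}\|\tilde\omega\|_\infty\right)^{-1}$ (infimum over all straight invariant fundamental cocycles), an \emph{upper} bound on $\|M\|$ is equivalent to a \emph{lower} bound on $\|\tilde\omega\|_\infty$ valid for \emph{every} such cocycle. Producing one cocycle with $\|\Sm^\ast\tilde\omega\|_\infty$ small --- your stated goal --- makes the infimum small and hence proves that $\|M\|$ is \emph{large}; it can never yield $\|M\|\le n!\,\|d\Sm\|_\infty^n\vol(M,g)$. Your own sentence ``the right reading is $\|M\|\le(\inf\|\Sm^\ast\tilde\omega\|_\infty)^{-1}$, so it suffices to produce one $\tilde\omega$ with $\|\Sm^\ast\tilde\omega\|_\infty$ small'' contains the contradiction: making one term of the infimum small makes the right-hand side larger, not smaller. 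A secondary problem is that your candidate cocycle, the ``straightened Riemannian volume form,'' does not exist for a general closed $M$: geodesic straightening requires nonpositive curvature or a hyperbolic structure, whereas in the alternative definition used here ``straight'' only means that the cochain is determined by the vertices.

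The paper's proof runs in the opposite direction and uses a mechanism absent from your sketch. Starting from an \emph{arbitrary} straight invariant fundamental cocycle $\tilde\omega$ with $\|\tilde\omega\|_\infty<\infty$ (antisymmetrized without loss), one builds the differential $n$-form
$$
\tilde\alpha_y(u_1,\ldots,u_n)=n!\;\tilde\omega\bigl(\Sm(y),d_y\Sm(u_1),\ldots,d_y\Sm(u_n)\bigr),
$$
that is, the pullback under $\Sm$ of the linear $n$-form on the space of measures induced by the multilinear extension of $\tilde\omega$. Multilinearity gives $\|\tilde\alpha\|_\infty\le n!\,\|\tilde\omega\|_\infty\|d\Sm\|_\infty^n$: the $n$ factors of $\|d\Sm\|_\infty$ come one per tangent slot $d_y\Sm(u_i)$, not from any measure-theoretic ``spread'' of $\Sm(y)$ or from controlling $\int_M\Sm(y)\,dy$. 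One then checks $[\alpha]=\Omega_M$ by evaluating on a lifted triangulation and homotoping $\Sm\circ\tilde\sigma$ to the linear simplex on the measures $\Sm(y^\sigma_i)$, so that $1=\int_M\alpha\le\|\tilde\alpha\|_\infty\vol(M,g)$; the single factor $\vol(M,g)$ enters exactly here, through integrating a sup-norm-bounded form over $M$. Taking the infimum over $\tilde\omega$ gives the theorem. Your Steps 2--3 (bounding straight simplex volumes by products of edge lengths, and the absolute continuity of $\int_M\Sm(y)\,dy$) neither appear nor are needed, and the ``main obstacle'' you identify is precisely the construction of $\tilde\alpha$ that your proposal is missing.
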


\begin{proof}
If $\|M\|=0$ there is nothing to prove. So suppose that $\|M\|>0$ and fix a straight invariant fundamental cocycle $\tilde{\omega}$ with $\|\tilde{\omega}\|_\infty<\infty$. Observe that we can first antisymmetrize this cocycle by considering the following straight cochain
$$
\tilde{\omega}_{ant}(y_0,\ldots,y_n)={1 \over (n+1)!} \sum_\delta \, [\delta] \cdot \tilde{\omega}(y_{\delta(0)},\ldots, y_{\delta(n)})
$$ 
where $\delta$ runs over all permutations of $\{0,\ldots,n\}$ and where $[\delta]$ stands for the signature of $\delta$. The cochain $\tilde{\omega}_{ant}$ is still a straight invariant fundamental cocycle and satisfies
$$
\|\tilde{\omega}_{ant}\|_\infty\leq\|\tilde{\omega}\|_\infty.
$$
So let assume that $\tilde{\omega}$ is antisymmetric.

Now  we associate to $\omegatil$ a differential $n$-form on $\Mtil$ defined as follows.
 For $y \in \Mtil$ and $u_1,\ldots,u_n \in T_y \Mtil$, set
$$
\tilde{\alpha}_y(u_1,\ldots,u_n):=n ! \, \, \tilde{\omega}\left(\Sm(y),d_y\Sm(u_1),\ldots,d_y\Sm(u_n)\right).
$$
It is straightforward to check that the sup-norm
$$
\|\tilde{\alpha}\|_\infty:= \sup \tilde{\alpha}_y(u_1,\ldots,u_n)
$$
where the supremum is taken over all  $y \in \Mtil$ and $u_1,\ldots,u_n \in S_y$ is bounded as follows:
$$
\|\tilde{\alpha}\|_\infty\leq n! \, \, \|\tilde{\omega}\|_\infty \|d \Sm\|_\infty^n.
$$
Observe that $\tilde{\alpha}$ is $\pi_1M$-invariant and that the induced differential $n$-form $\alpha$ on $M$ lies in the cohomological class of $\Omega_M$. Indeed, first $\omegatil$ induces an $n$-form $\omegatil^\mathcal{M}$ on $\mathcal{M}$ by setting
$$
\omegatil^{\mathcal{M}}_\mu(\mu_1,\ldots,\mu_n)=n! \cdot \omegatil(\mu,\mu_1,\ldots,\mu_n)
$$
for any $\mu \in \mathcal{M}$ and $\mu_1,\ldots,\mu_n \in T_{\mu} \mathcal{M}\simeq \mathcal{M}$. Because $\omegatil$ is $(n+1)$-linear on $\mathcal{M}$, the equality
$$
\int_{[\mu_0,\ldots,\mu_n]} \omegatil^{\mathcal{M}}=\omegatil(\mu_0,\mu_1,\ldots,\mu_n)
$$
is satisfied for all linear $n$-simplex $[\mu_0,\ldots,\mu_n]$ of $\mathcal{M}$. 
Fix a triangulation  $\mathcal{T}$ of $M$. The cycle $\sum_{\sigma \in \mathcal{T}} \sigma$ which defines the fundamental class of $M$ lift to a chain of $\Mtil$ such that the union of the lifted simplices gives a triangulation of the closure of some fundamental domain for the $\pi_1M$-action. 
For all lifted simplex $\sigmatil$ denote by $\{y^\sigma_0,\ldots,y^\sigma_n\}$  its set of vertices.
From the equality
$$
\alphatil=\Sm^\ast (\omegatil^{\mathcal{M}})
$$
where $\Sm^\ast (\cdot)$ denotes the pullback operator on differential $n$-forms, we deduce that
$$
\int_M \alpha=\sum_{\sigma \in \mathcal{T}} \int_\sigma \alpha=\sum_{\sigma \in \mathcal{T}}\int_{\sigmatil} \alphatil=\sum_{\sigma \in \mathcal{T}} \int_{\sigmatil} \Sm^\ast (\omegatil^{\mathcal{M}})=\sum_{\sigma \in \mathcal{T}} \int_{\Sm\circ \sigmatil} \omegatil^{\mathcal{M}}.
$$
Because the $n$-simplex $\Sm\circ \sigmatil$ of $\mathcal{M}$ is homotopic to the linear simplex $[\Sm(y^\sigma_0),\ldots,\Sm(y^\sigma_n)]$, we get that
\begin{eqnarray*}
\int_M \alpha&=&\sum_{\sigma \in \mathcal{T}} \int_{[\Sm(y^\sigma_0),\ldots,\Sm(y^\sigma_n)]} \omegatil^{\mathcal{M}}\\
&=&\sum_{\sigma \in \mathcal{T}} \omegatil(\Sm(y^\sigma_0),\ldots,\Sm(y^\sigma_n))\\
&=&\sum_{\sigma \in \mathcal{T}} \Sm^\ast \omegatil(y^\sigma_0,\ldots,y^\sigma_n)=1
\end{eqnarray*}
and thus $[\alpha]=\Omega_M$.

By integration we get
$$
1= \int_M \alpha \leq \|\tilde{\alpha}\|_\infty \vol(M,g)\leq n! \, \, \|\tilde{\omega}\|_\infty \|d \Sm\|_\infty^n \vol(M,g).
$$
Taking the infimum over all straight invariant fundamental cocycles $\tilde{\omega}$ with $\|\tilde{\omega}\|_\infty<\infty$ leads to the theorem.
\end{proof}

\bigskip

\section{Application to volume estimates}\label{sec:proofs}

We now prove our main result Theorem \ref{th:main} and show how to deduce Corollary \ref{cor:main}. 

\subsection{Proof of Theorem \ref{th:main}}

Consider a closed Riemannian manifold $(M,g)$ with non-zero simplicial volume and suppose that
$$
\max_{y \in \Mtil} {|\Btil(y,R)| \over |\Btil(y,R/2)|}\leq {V_{hyp}(R) \over V_{hyp}(R/2)}
$$
for some $R\geq 1$.

Fix $\lambda>0$ and  define
$$
\tilde{\Sm}_{\lambda,R}(y)=(e^{-\lambda d_{\gtil}(y,y')}-e^{-\lambda R})\cdot \mathbb{1}_{\Btil(y,R)}(y')\cdot d\vol_{\gtil}(y').
$$
Here $\mathbb{1}_{\Btil(y,R)}$ denotes the charateristic function of a metric ball $\Btil(y,R)$ in $(\Mtil,\gtil)$ centered at $y$ and of radius $R$, while $d\vol_{\gtil}$ denotes the Riemannian volume density. It is straightforward to check that $\Sm_{\lambda,R}=\tilde{\Sm}_{\lambda,R}/\|\tilde{\Sm}_{\lambda,R}\|$ is a smoothing operator, and to observe that it diffuses points into a measure whose support lies on the metric ball $\Btil(y,R)$ with a exponential decreasing decay in terms of the distance to the center. 
The smoothing operator $\Sm_{\lambda,R}=\tilde{\Sm}_{\lambda,R}/\|\tilde{\Sm}_{\lambda,R}\|$  satisfies 
\begin{eqnarray*}
\|d_y\Sm_{\lambda,R}\| &= & {1 \over \|\tilde{\Sm}_{\lambda,R}(y)\|}\left\|d_y\tilde{\Sm}_{\lambda,R}- {d_y\|\tilde{\Sm}_{\lambda,R}(y)\| \over  \|\tilde{\Sm}_{\lambda,R}(y)\|}\tilde{\Sm}_{\lambda,R}\right\|\\
&\leq&{1   \over \|\tilde{\Sm}_{\lambda,R}(y)\|} \left(\|d_y\tilde{\Sm}_{\lambda,R}\|+\sup_\tau d_y\|\tilde{\Sm}_{\lambda,R}(y)\|(\tau)\right).
\end{eqnarray*}
Because 
$$
\|\tilde{\Sm}_{\lambda,R}(y)\|=\int_{\Btil(y,R)} (e^{-\lambda d_{\gtil}(y,y')}-e^{-\lambda R}) d\vol_{\gtil}(y'),
$$
we see that 
$$
d_y\|\tilde{\Sm}_{\lambda,R}(y)\|=-\lambda\cdot \int_{\Btil(y,R)} d_y d_{\gtil}(\cdot,y')e^{-\lambda d_{\gtil}(y,y')} d\vol_{\gtil}(y').
$$ 
In particular
$$
\sup_\tau d_y\|\tilde{\Sm}_{\lambda,R}(y)\|(\tau)\leq  \lambda\cdot \int_{\Btil(y,R)} e^{-\lambda d_{\gtil}(y,y')} d\vol_{\gtil}(y').
$$
But we also have that
$$
\|d_y\tilde{\Sm}_{\lambda,R}\|\leq  \lambda\cdot \int_{\Btil(y,R)} e^{-\lambda d_{\gtil}(y,y')} d\vol_{\gtil}(y')
$$
which leds to
$$
\|d_y\Sm_{\lambda,R}\| \leq 2\lambda \cdot {I(\lambda,R) \over I(\lambda,R)- |\Btil(y,R)|}
$$
with 
$$
I(\lambda,R):=\int_{\Btil(y,R)} e^{-\lambda(d(y,y')-R)} d\vol_{\gtil}(y').
$$
Our assumption on $R$ implies the following lower bound:
\begin{eqnarray*}
I(\lambda,R)&\geq&  e^{\lambda R\over 2} |\Btil(y,R/2)|\\
&\geq& e^{\lambda R\over 2} \cdot |\Btil(y,R)| \cdot {V_{hyp}(R/2) \over V_{hyp}(R)}.
\end{eqnarray*}
From this we derive that
$$
\|d_y\Sm_{\lambda,R}\| \leq2 \lambda \cdot {e^{\lambda R \over 2} \cdot  V_{hyp}(R/2) \over e^{\lambda R/2} \cdot  V_{hyp}(R/2) - V_{hyp}(R)},
$$
the function $I \mapsto I/(I-B)$ being strictly decreasing for a fixed positive constant $B$. Now by fixing
$$
\lambda = {2 \over R} \log\left({2 V_{hyp}(R) \over V_{hyp}(R/2)}\right) \Leftrightarrow {e^{\lambda R \over 2} \cdot  V_{hyp}(R/2) \over e^{\lambda R/2} \cdot  V_{hyp}(R/2) - V_{hyp}(R)}=2,
$$
we conclude that
$$
\|d_y\Sm_{\lambda,R}\|  \leq  {8 \over R} \log\left({2 V_{hyp}(R) \over V_{hyp}(R/2)}\right)=:f(R).
$$
The smoothing inequality gives that
$$
\|M\|\leq n! \left(f(R)\right)^n \vol(M,g).
$$
The asymptotic $V_{hyp}(R)\simeq  {Vol(S^{n-1}) \over 2^{n-1}} e^{(n-1)R}$ when $R \to \infty$ implies that 
$$
\lim_{R \to \infty}f(R)= 4(n-1).
$$
We thus define $C_n:=(\sup_{R\geq 1}f(R))^n<\infty$ and derive that the initial assumption that 
$$
\max_{y \in \Mtil} {|\Btil(y,R)| \over |\Btil(y,R/2)|}\leq {V_{hyp}(R) \over V_{hyp}(R/2)}
$$
for some $R\geq 1$ implies the inequality
$$
\|M\|\leq n! \, C_n \, \vol(M,g).
$$
This proves Theorem \ref{th:main} with $\alpha_n=1/\left(n! \,C_n\right)$.
\\

\begin{remark}
Remark that  
$$
f(R)\simeq {8(n+1)\log 2\over R}
$$
for $R \to 0$.
So this strategy fails for small $R$ and can not be used to prove a non-sharp version of Schoen conjecture.\\
\end{remark}

\begin{remark}\label{rem:main}
 In \cite[p.37]{Gro82}, Gromov used exactly this smoothing operator to compare the volume and the volume entropy by analyzing the asymptotic behaviour of the smoothing inequality when $R \to \infty$. 
\end{remark}

\subsection{Proof of Corollary \ref{cor:main}}
Let $M$ be a closed hyperbolic manifold and $g$ a Riemannian metric with negative sectional curvature.
Suppose that   $\vol(M,g)< \beta_n \vol(M,hyp)$ with
$$
\beta_n:={\alpha_n \over \lambda_n^n \cdot \mathcal{V}_n}
$$
where $\lambda_n\geq 2$ only depends on the dimension $n$ and will be determined in the sequel.\\

The conformal metric $h=\lambda_n^2 g$ on $M$ satisfies the condition
$$
\vol(M,h)=\lambda_n^n \vol(M,g)< \alpha_n \|M\|.
$$
 By Theorem \ref{th:main} for any $R\geq 1$ (recall that $\lambda_n\geq 2$ and so $\lambda_n R\geq 2$) we can find $y \in \Mtil$ such that
 $$
{|\Btil_{\htil}(y,\lambda_n R)|_{\htil} \over |\Btil_{\htil}(y, \lambda_n R/2)|_{\htil}}> {V_{hyp}(\lambda_n R) \over V_{hyp}(\lambda_n R/2)}.
 $$
 Here we have denoted by $|\cdot|_{\htil}$ the volume with respect to the pullback metric $\htil$ of $h$ on $\Mtil$, and by $\Btil_{\htil}(y,R)$ the metric ball of radius $R$ and center $y$ in $(\Mtil,\htil)$.  Because $|\Btil_{\htil}(y,\lambda_n R)|_{\htil}=\lambda_n^n |\Btil_{\gtil}(y,R)|_{\gtil}$ we deduce that
 $$
{|\Btil_{\gtil}(y,R)|_{\gtil} \over |\Btil_{\gtil}(y, R/2)|_{\gtil}}> {V_{hyp}(\lambda_n R) \over V_{hyp}(\lambda_n R/2)}.
 $$
 Now the function $R \mapsto {V_{hyp}(R) \over V_{hyp}(R/2)} e^{-(n-1)R/2}$ is positive and tends to $1$ when $R \to \infty$. Thus 
 $$
 c_n=\inf_{R\geq2}  {V_{hyp}(R) \over V_{hyp}(R/2)} e^{-(n-1)R/2}>0
 $$ 
 and we deduce that
 $$
 |\Btil_{\gtil}(y,R)|_{\gtil} > c_n e^{(n-1)\lambda_n R/2} \cdot |\Btil_{\gtil}(y, R/2)|_{\gtil}.
 $$
By \cite[Proposition 14]{Cr80} we know that for any positive $r$ less than half the injectivity radius of $(\Mtil,\gtil)$ 
$$
 |\Btil_{\gtil}(y,r)|_{\gtil}\geq c'_n r^n
$$
for some positive constant $c'_n$.
But negative sectional curvature and simply connectedness implies that the injectivity radius is infinite, and thus the inequality above holds for any $r$.
Thus
$$
|\Btil_{\gtil}(y,R/2)|_{\gtil}\geq  c'_n(1/2)^n
$$
and consequently
$$
|\Btil_{\gtil}(y,R)|_{\gtil}> c''_n e^{(n-1)\lambda_n R/2} 
$$
where $c''_n:=c_n\cdot c'_n \cdot (1/2)^n$.

Observe that 
$$
c''_n e^{(n-1)\lambda R/2}\geq V_{hyp}(R) \Leftrightarrow \lambda \geq {2\over (n-1)R} \log \left({V_{hyp}(R)\over c''_n}\right).
$$ 
Because the function $R \mapsto {2\over (n-1)R} \log \left({V_{hyp}(R)\over c''_n}\right)$ tends to $2$ when $R \to \infty$ we can define {\it a posteriori}
$$
\lambda_n:=\sup_{R\geq 1} {2\over (n-1)R} \log \left({V_{hyp}(R)\over c''_n}\right)\geq 2.
$$
That way we deduce that   for all $R\geq 1$
 $$
 |\Btil_{\gtil}(y,R)|_{\gtil} > V_{hyp}(R).
 $$ 
This proves Corollary \ref{cor:main}.\\

Remark that Corollary \ref{cor:main} holds if we change the assumption of hyperbolicity on $M$ by the non-vanishing condition of its simplicial volume (the hyperbolic volume in the volume upperbound bound condition being replaced by the simplicial volume). In dimension $3$, the Geometrization Conjecture implies that a closed manifold admits a negative sectional curved metric if and only if it is hyperbolic. In higher dimensions, there exist counterexamples due to Gromov \& Thurston, see \cite{GT87}. More generally, using the same approach for proving Corollary \ref{cor:main}, we deduce the following.

\begin{corollary}\label{cor:isoembolic}
Any closed Riemannian manifold  $M$  with $\|M\|>0$, injectivity radius at least $1$ and $\vol(M,g)< (\beta_n\cdot {\mathcal V}_n)\cdot \|M\|$ satisfies that for all $R\geq 1$
$$
V_g(R):=\sup_{y \in \Mtil} |\Btil(y,R)|> V_{hyp}(R).
$$
\end{corollary}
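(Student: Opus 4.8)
The plan is to replay the proof of Corollary \ref{cor:main} almost verbatim, the only point of divergence being that the hypothesis of infinite injectivity radius used there (which came from negative curvature together with simple connectedness) is replaced by the quantitative bound that the injectivity radius of $(M,g)$ is at least $1$. First I would introduce a constant $\lambda_n\geq 2$ depending only on $n$, to be pinned down at the very end, set $\beta_n=\alpha_n/(\lambda_n^n\mathcal{V}_n)$, and rescale: the conformal metric $h=\lambda_n^2 g$ satisfies $\vol(M,h)=\lambda_n^n\vol(M,g)\leq \alpha_n\|M\|$, so Theorem \ref{th:main} applies to $(M,h)$. Hence for each $R\geq 1$ (so that $\lambda_n R\geq 2$) there is $y\in\Mtil$ with
$$\frac{|\Btil_{\htil}(y,\lambda_n R)|_{\htil}}{|\Btil_{\htil}(y,\lambda_n R/2)|_{\htil}}\geq \frac{V_{hyp}(\lambda_n R)}{V_{hyp}(\lambda_n R/2)}.$$
Rescaling back via $|\Btil_{\htil}(y,\lambda_n r)|_{\htil}=\lambda_n^n|\Btil_{\gtil}(y,r)|_{\gtil}$ and using the positive constant $c_n=\inf_{R\geq 2}\frac{V_{hyp}(R)}{V_{hyp}(R/2)}e^{-(n-1)R/2}$ exactly as in the proof of Corollary \ref{cor:main}, this yields
$$|\Btil_{\gtil}(y,R)|_{\gtil}\geq c_n\, e^{(n-1)\lambda_n R/2}\,|\Btil_{\gtil}(y,R/2)|_{\gtil}.$$

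Second, I would bound the denominator from below using the injectivity radius hypothesis. By Croke's inequality \cite[Proposition 14]{Cr80}, for every $r$ at most half the injectivity radius one has $|\Btil_{\gtil}(y,r)|_{\gtil}\geq c'_n r^n$ with $c'_n>0$ depending only on $n$; since the injectivity radius of $(M,g)$ is at least $1$, this applies in particular at the fixed scale $r=1/2$, and monotonicity of the volume of balls gives, for all $R\geq 1$,
$$|\Btil_{\gtil}(y,R/2)|_{\gtil}\geq |\Btil_{\gtil}(y,1/2)|_{\gtil}\geq c'_n(1/2)^n.$$
Combining the two displays produces $|\Btil_{\gtil}(y,R)|_{\gtil}\geq c'''_n\, e^{(n-1)\lambda_n R/2}$ with $c'''_n:=c_n c'_n(1/2)^n$, a constant depending only on $n$ — crucially, not on $\lambda_n$.

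Finally, since $c'''_n$ is independent of $\lambda_n$ and the function $R\mapsto \frac{2}{(n-1)R}\log(V_{hyp}(R)/c'''_n)$ is continuous on $[1,\infty)$ and tends to $2$ as $R\to\infty$, I may define \emph{a posteriori}
$$\lambda_n:=\sup_{R\geq 1}\frac{2}{(n-1)R}\log\!\left(\frac{V_{hyp}(R)}{c'''_n}\right)\geq 2,$$
so that $c'''_n e^{(n-1)\lambda_n R/2}\geq V_{hyp}(R)$ for every $R\geq 1$, i.e. $V_g(R)\geq V_{hyp}(R)$, as claimed. I expect no genuine obstacle beyond this bookkeeping: negative curvature entered the proof of Corollary \ref{cor:main} only to make the injectivity radius infinite, yet the Croke lower bound is needed solely at the single scale $1/2$, and the assumption $\mathrm{inj}(M,g)\geq 1$ supplies precisely that. (If one wished to weaken the bound to $\mathrm{inj}(M,g)\geq\rho$ for some $\rho>0$, one would replace $1/2$ by $\min\{1/2,\rho/2\}$ throughout and adjust $c'''_n$, hence $\lambda_n$, accordingly.)
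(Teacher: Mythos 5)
Your proposal is correct and is exactly the argument the paper intends: it states that Corollary \ref{cor:isoembolic} follows ``using the same approach for proving Corollary \ref{cor:main}'', with negative curvature entering that proof only through the infinite injectivity radius of $(\Mtil,\gtil)$, which you correctly replace by the hypothesis $\mathrm{inj}(M,g)\geq 1$ (hence $\mathrm{inj}(\Mtil,\gtil)\geq 1$) to apply Croke's bound at the single scale $r=1/2$. The bookkeeping — $c'''_n$ independent of $\lambda_n$, so $\lambda_n$ may be fixed \emph{a posteriori} — is handled just as in the paper.
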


\bigskip

\subsection{Concluding remarks}\label{sec:concl}

Let us emphasize how Theorems \ref{th:main} and \ref{cor:main} interact with the results obtained by Gromov using this technique.\\

First, as already mentioned in the introduction, Gromov proved in \cite[Corollary, p. 36]{Gro82} the following theorem: if the Ricci curvature is at least that of hyperbolic space, then the volume of the manifold is at least a constant times the simplicial volume. Because Bishop-Gromov inequality implies that with such a Ricci bound, the ratio between volumes of balls of radius $R$ and $R/2$ is at most the corresponding ratio in hyperbolic space, Theorem \ref{th:main} appears as a stronger macroscopic version of this result.\\

Secondly, Gromov used the smoothing inequality in \cite{Gro82} to prove the following non-sharp version of Besson, Courtois \& Gallot theorem: {\it if $M$ is a closed hyperbolic manifold and $g$ a Riemannian metric satisfying $\vol(M,g) \leq (\alpha_n / {\mathcal{V}_n}) \cdot \vol(M,hyp)$, then
$$
h_{vol}(M,g)\geq h_{vol}(M,hyp).
$$
}
Here recall that $h_{vol}$ denotes the {\it volume entropy} (also known as {\it asymptotic volume}) defined by
$$
h_{vol}(M,g)=\lim_{R\to\infty} {\log |\Btil(y,R)|Ê\over R}
$$
where $y$ denotes any point in $\Mtil$. This volume entropy inequality is easily deduced from Theorem \ref{th:main}, as we use the same smoothing operator as in Gromov's proof.\\

Lastly, Gromov combined in \cite{Gro83} his smoothing technique with the existence of regular geometric cycles to prove a systolic inequality for manifolds with non-zero simplicial volume, see \cite[Theorem 6.4.D']{Gro83} and \cite[Theorem 3.B.1]{Gro96}. This systolic inequality is a central result in systolic geometry and can be described as follows: there exists a positive constant $C_n$ such that, if $(M,g)$ is a closed Riemannian manifold  $M$ with $\|M\|>0$, then 
$$
{\vol(M,g)  \over \sys(M,g)^n} \geq C_n \cdot {\|M\|  \over (\log \|M\|)^n}.
$$
Here $\sys$ denotes the systole, that is the least length of a non-contractible loop. Our Theorem \ref{cor:main} implies this result for negatively curved metrics. The proof of the existence of regular geometric cycles (sort of almost minimizing objects in systolic geometry) is quite delicate, see \cite{Bul15} for a complete and detailed proof. \\

\noindent {\bf Acknowledgements.} The authors are grateful to the referee for valuable comments, and to H.~Bray and L.~Guth for helpful discussions.


\end{document}